\LetLtxMacro{\oldsqrt}{\sqrt}
\renewcommand{\sqrt}[2][]{\,\oldsqrt[#1]{#2}\,}
\def\@tocline#1#2#3#4#5#6#7{\relax
  \ifnum #1>\c@tocdepth 
  \else
    \par \addpenalty\@secpenalty\addvspace{#2}%
    \begingroup \hyphenpenalty\@M
    \@ifempty{#4}{%
      \@tempdima\csname r@tocindent\number#1\endcsname\relax
    }{%
      \@tempdima#4\relax
    }%
    \parindent\z@ \leftskip#3\relax \advance\leftskip\@tempdima\relax
    \rightskip\@pnumwidth plus4em \parfillskip-\@pnumwidth
    #5\leavevmode\hskip-\@tempdima
      \ifcase #1
       \or\or \hskip 1em \or \hskip 2em \else \hskip 3em \fi%
      #6\nobreak\relax
    \dotfill\hbox to\@pnumwidth{\@tocpagenum{#7}}\par
    \nobreak
    \endgroup
  \fi}
\def\greekbolds#1{%
 \@for\next:=#1\do{%
    \def\X##1;{%
     \expandafter\def\csname V##1\endcsname{\boldsymbol{\csname##1\endcsname}}
     }
   \expandafter\X\next;
  }
}
\def\make@bb#1{\expandafter\def
  \csname bb#1\endcsname{{\mathbb{#1}}}\ignorespaces}
\def\make@bbm#1{\expandafter\def
  \csname bb#1\endcsname{{\mathbbm{#1}}}\ignorespaces}
\def\make@bf#1{\expandafter\def\csname bf#1\endcsname{{\bf
      #1}}\ignorespaces} 
\def\make@gr#1{\expandafter\def
  \csname gr#1\endcsname{{\mathfrak{#1}}}\ignorespaces}
\def\make@scr#1{\expandafter\def
  \csname scr#1\endcsname{{\mathscr{#1}}}\ignorespaces}
\def\make@cal#1{\expandafter\def\csname cal#1\endcsname{{\mathcal
      #1}}\ignorespaces} 
\def\do@Letters#1{#1A #1B #1C #1D #1E #1F #1G #1H #1I #1J #1K #1L #1M
                 #1N #1O #1P #1Q #1R #1S #1T #1U #1V #1W #1X #1Y #1Z}
\def\do@letters#1{#1a #1b #1c #1d #1e #1f #1g #1h #1i #1j #1k #1l #1m
                 #1n #1o #1p #1q #1r #1s #1t #1u #1v #1w #1x #1y #1z}
\newcommand{\abs}[1]{\lvert #1 \rvert}
\newcommand{\zmod}[1]{\mathbb{Z}/ #1 \mathbb{Z}}
\newcommand{\wh}{\widehat}
\newcommand{\sg}{\mathrm{sg}}
\newcommand{\scc}{\mathrm{sc}} 
\newcommand{\op}{\mathrm{op}}
\newcommand{\bsh}{\backslash}
\newcommand{\whI}{\widehat{I}}
\newcommand{\whJ}{\widehat{J}}
\newcommand{\whD}{\widehat{\mathfrak{D}}}
\newcommand{\whE}{\widehat{\mathcal{E}}}
\newcommand{\whF}{\widehat{F}}
\newcommand{\whK}{\widehat{K}}
\newcommand{\whO}{\widehat{O}}
\newcommand{\wcO}{\widehat{\mathcal{O}}}
\newcommand{\wbZ}{\widehat{\mathbb{Z}}}
\DeclareMathOperator{\Emb}{Emb}
\DeclareMathOperator{\SG}{SG}
\DeclareMathOperator{\SCl}{SCl}
\def\op{\mathrm{op}}
\DeclareMathSymbol{\twoheadrightarrow} {\mathrel}{AMSa}{"10}
\DeclareMathOperator{\Tp}{Tp}
\DeclareMathOperator{\Cl}{Cl}
\DeclareMathOperator{\Hom}{Hom}
\DeclareMathOperator{\Gal}{Gal}
\DeclareMathOperator{\Nm}{N}  
\DeclareMathOperator{\Nr}{Nr}
\newcommand{\Z}{\mathbb Z}
\newcommand{\Q}{\mathbb Q}
\renewcommand{\grD}{D}
\renewcommand{\whD}{\widehat{D}}
\newcounter{thmcounter} 
\numberwithin{thmcounter}{section}  
\newtheorem{thm}[thmcounter]{Theorem}
\newtheorem{lem}[thmcounter]{Lemma}
\newtheorem{cor}[thmcounter]{Corollary}
\newtheorem{prop}[thmcounter]{Proposition}
\theoremstyle{definition}
\newtheorem{defn}[thmcounter]{Definition}
\newtheorem{ex}[thmcounter]{Example}
\newtheorem{rem}[thmcounter]{Remark}
\numberwithin{equation}{section}
\numberwithin{figure}{section}
\numberwithin{table}{section}
\newtheoremstyle{notitle}  
  {}
  {}
  {\itshape}
  {}
  {}
  {\ }
  {.5em}
  {}
\theoremstyle{notitle}
 \title[Optimal spinor selectivity]{Optimal spinor  selectivity for 
 quaternion orders}
 \author{Jiangwei Xue}
\address{(Xue) Collaborative Innovation Center of Mathematics, School of
  Mathematics and Statistics, Wuhan University, Luojiashan, 430072,
  Wuhan, Hubei, P.R. China}   
\address{(Xue) Hubei Key Laboratory of Computational Science (Wuhan
  University), Wuhan, Hubei,  430072, P.R. China.}
\email{xue\_j@whu.edu.cn}
\author{Chia-Fu Yu}
\address{(Yu) Institute of Mathematics,
  Academia Sinica and NCTS, Astronomy-Mathematics
  Building, No. 1, Sec. 4, Roosevelt Road, Taipei 10617, TAIWAN.}
\email{chiafu@math.sinica.edu.tw}
\begin{document}
\date{\today} 
 \subjclass[2010]{11R52, 11S45} 
 \keywords{Quaternion algebra, optimal embedding, selectivity.}
\begin{abstract}
 Let $D$ be a quaternion algebra over a number
  field $F$, and $\scrG$ be an arbitrary genus of $O_F$-orders of full
  rank in $D$. Let $K$ be a quadratic field extension of $F$ that
  embeds into $D$, and $B$ be an $O_F$-order in $K$ that can be
  optimally embedded into some member of $\scrG$.   We provide a necessary and sufficient condition for $B$ to be
  optimally spinor selective for the genus $\scrG$, which generalizes
  previous existing optimal selectivity criterions for Eichler orders as given by 
  Arenas, Arenas-Carmona and Contreras, and by Voight independently. 
  This allows us to obtain a refinement of the classical trace formula for optimal
  embeddings, which  will be called the spinor trace formula.
  When $\scrG$ is a genus of Eichler orders, we  extend Maclachlan's relative
  conductor formula 
  for optimal selectivity from
  Eichler orders of square-free levels to all Eichler orders. 
 \end{abstract}
 
\maketitle




\section{Introduction}

Let $F$ be a number field, $K$ be a quadratic field extension of $F$,
and $D$ be a quaternion $F$-algebra.  According to the
Hasse-Brauer-Noether-Albert Theorem \cite[Theorem~III.3.8]{vigneras},
$K$ embeds into $D$ over $F$ if and only if there is no place of $F$ that is
simultaneously ramified in $D$ and split in $K$.  The optimal (spinor)
selectivity question studies an integral refinement of this 
theorem.  To state the question and the answers  precisely, we set up some
notations and definitions.

Throughout this paper, we assume that $K$ embeds into $D$ over $F$. Orders in $D$ (resp. $K$) refer
exclusively to
 $O_F$-orders of
\emph{full rank} in $D$ (resp.~$K$), where $O_F$ is the
ring of integers of $F$ as usual. 
Two orders $\calO$ and $\calO'$ in $D$ are said to be \emph{locally isomorphic}
if their
$\grp$-adic completions $\calO_\grp$ and $\calO_\grp'$ are isomorphic
at every finite prime $\grp$ of $F$.  This defines an equivalence
relation on the set of orders in $D$,  and an equivalence class is
called a \emph{genus of orders in $D$}. For example, all maximal orders
of $D$ form a single genus. The genus represented by an (arbitrary) order
$\calO$ will be denoted by $\scrG(\calO)$. 
Similarly, two orders $\calO$ and $\calO'$ in $D$ are said to be
\emph{of the same type} if they are $O_F$-isomorphic, or equivalently,
if there exists $x\in D^\times$ such that $\calO'=x\calO x^{-1}$. The
type of $\calO$ will be denoted by $[\calO]$,
i.e.~$[\calO]:=\{x\calO x^{-1}\mid x\in D^\times\}$. 
 Each genus $\scrG$ of orders in
$D$ is subdivided into \emph{finitely many} types \cite[\S17.1]{voight-quat-book}, and we write
$\Tp(\scrG)$ for the finite set of types in $\scrG$.  If
$\scrG=\scrG(\calO)$, then we  put $\Tp(\calO):=\Tp(\scrG(\calO))$ and regard it as a pointed set with $[\calO]$ as
the base point.




Given an order $B$ in $K$ and an order $\calO$ in $D$, we write  $\Emb(B, \calO)$ for the set of 
\emph{optimal embeddings} of $B$ into $\calO$, that is
\begin{equation}
  \label{eq:22}
  \Emb(B, \calO):=\{\varphi\in \Hom_F(K, \grD)\mid \varphi(K)\cap \calO=\varphi(B)\}.
\end{equation}
The unit group $\calO^\times$ acts on $\Emb(B, \calO)$ from the right by conjugation:
$\varphi\mapsto u^{-1}\varphi u$ for every $u\in \calO^\times$. It is
well known \cite[\S30.3--30.5]{voight-quat-book} that the number of of
orbits is finite both in the global and local cases, so we put
\begin{equation}
m(B, \calO, \calO^\times):=\abs{\Emb(B, \calO)/\calO^\times}, \quad
m(B_\grp, \calO_\grp, \calO_\grp^\times):=\abs{\Emb(B_\grp,
  \calO_\grp)/\calO_\grp^\times}. 
\end{equation}


Clearly, whether $\Emb(B, \calO)=\emptyset$ or not depends only on the
type $[\calO]$. Moreover, if $\Emb(B, \calO)\neq \emptyset$, then 
\begin{equation}
  \label{eq:3}
  \Emb(B_\grp,
\calO_\grp)\neq \emptyset \quad\text{for every finite prime $\grp$ of $F$}.\tag{$*$}
\end{equation}
We aim to study the converse of this implication. For the rest of
this paper, we assume that $\scrG$ is a genus of orders in $D$ such
that condition (\ref{eq:3}) holds for $B$ and  for
every $\calO\in \scrG$. This guarantees the existence of
$[\calO_0]\in \Tp(\scrG)$ such that $ \Emb(B, \calO_0)\neq \emptyset$
by \cite[Corollary~30.4.18]{voight-quat-book}.  The optimal
selectivity question further asks whether assumption \eqref{eq:3} implies that
$\Emb(B, \calO)\neq \emptyset$ for every type $[\calO]\in \Tp(\scrG)$.


The optimal selectivity question is best studied when  $D$ satisfies the Eichler condition (i.e.~there
exists an archimedean place of $F$ that does not ramify in
$D$), for otherwise one can easily construct examples that answer
negatively to this question (See Example~\ref{ex:Dpinf}).    However, the answer  can still be negative even if one assumes the 
Eichler condition,  as discovered by Chinburg and Friedman
\cite{Chinburg-Friedman-1999}.  Indeed, it was them who first introduced the notion of ``selectivity''.


\begin{defn}[{\cite[Definition~31.1.5]{voight-quat-book}}]
  Suppose that $D$ satisfies the Eichler condition. If $\Emb(B,
  \calO')=\emptyset$ for some  but
  not all\footnote{The ``not all'' part is automatic since we assume condition
    (\ref{eq:3}) throughout.}  orders 
  $\calO'\in \scrG$, then we say
  that $B$ is \emph{optimally selective} for $\scrG$. 
\end{defn}

Naturally, one further asks exactly when optimally selectivity
happens, and how to  characterize  those types of orders
$[\calO]\in \Tp(\scrG)$  that do admit  optimal embeddings from
$B$.  A lot research has been carried out on these topics following Chinburg and
Friedman's pioneering work.  Maclachlan 
\cite{Maclachlan-selectivity-JNT2008} first obtained an optimal
selectivity theorem for Eichler orders of
square-free levels. Independently, Arenas et~al.\
\cite{M.Arenas-et.al-opt-embed-trees-JNT2018} and Voight
\cite[Chapter~31]{voight-quat-book} removed the square-free condition
and obtained theorems for Eichler orders of
arbitrary levels. See
\cite[\S31.7.7]{voight-quat-book} for a brief historical note on the
contributions of Chan-Xu \cite{Chan-Xu-Rep-Spinor-genera-2004},
Guo-Qin \cite{Guo-Qin-embedding-Eichler-JNT2004}, Linowitz
\cite{Linowitz-Selectivity-JNT2012}, Arenas-Carmona
\cite{Arenas-Carmona-Spinor-CField-2003, Arenas-Carmona-2013,
  Arenas-Carmona-cyclic-orders-2012, Arenas-Carmona-Max-Sel-JNT-2012},
and many others.

Central to the theory of selectivity is a class field $\Sigma_\scrG$
attached to the genus $\scrG$ (See Definition~\ref{defn:spinor-field}). It is an abelian
extension of $F$ of exponent $2$ satisfying $[\Sigma_\scrG:
F]=\abs{\Tp(\scrG)}$ (still assuming that $D$ satisfies the Eichler
condition, cf.~\eqref{eq:8} and Remark~\ref{rem:Eichler-con}). 
For the reader's convenience, we reproduce Voight's
formulation of optimal selectivity theorem for Eichler orders.  
\begin{thm}[{\cite[Theorem~31.1.7]{voight-quat-book}}]\label{thm:Voight31}
  Keep the assumption that $D$ satisfies the Eichler condition.  Let
  $\scrG$ be a genus of Eichler orders. Then the following statements
  hold.
  \begin{enumerate}[label=(\roman*)]
\item $B$  is optimally selective for the genus $\scrG$ if and only if
  $K\subseteq \Sigma_\scrG$.
  \item If $B$ is optimally selective for the genus $\scrG$, then
    $\Emb(B, \calO)\neq \emptyset$ for exactly half of the types
    $[\calO]\in \Tp(\scrG)$. 
  \item In all cases, $m(B, \calO, \calO^\times)=m(B, \calO',
    \calO'^\times)$ for $\calO, \calO'\in \scrG$ whenever both sides
    are nonzero. 
      \end{enumerate}
    \end{thm}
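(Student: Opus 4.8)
The plan is to move everything into the idele group and deduce all three statements from one computation with the spinor class field $\Sigma_\scrG$. Fix a representative $\calO_0\in\scrG$ and write $\widehat{D}=D\otimes_{O_F}\widehat{O_F}$, with adelic order $\widehat{\calO_0}=\prod_\grp\calO_{0,\grp}$; for $\widehat{g}\in\widehat{D}^\times$ set $\calO_0^{\widehat{g}}:=D\cap\widehat{g}^{-1}\widehat{\calO_0}\widehat{g}$. Recall (see Definition~\ref{defn:spinor-field} and \cite{voight-quat-book}) that $[\calO_0^{\widehat{g}}]\mapsto N(\widehat{\calO_0})\widehat{g}D^\times$ is a bijection $\Tp(\scrG)\cong N(\widehat{\calO_0})\backslash\widehat{D}^\times/D^\times$, where $N(\widehat{\calO_0})$ is the adelic normalizer, and that — because $D$ satisfies the Eichler condition, so strong approximation holds for $\SL_1(D)$ — the reduced norm identifies this double coset set with a ray-class-type quotient of $\widehat{F}^\times$, namely $\Gal(\Sigma_\scrG/F)$. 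Thus $\Tp(\scrG)$ acquires the structure of a finite abelian group of exponent $2$ with base point $[\calO_0]$, intertwined by $\mathrm{nrd}$ with $\Gal(\Sigma_\scrG/F)$; in particular $\abs{\Tp(\scrG)}=[\Sigma_\scrG:F]$.

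First I would fix a global optimal embedding $\varphi_0\colon B\hookrightarrow\calO_0$ — one exists by the running hypothesis — and describe the selected set $E(B,\scrG):=\{[\calO]\in\Tp(\scrG):\Emb(B,\calO)\neq\emptyset\}$. By the Skolem--Noether theorem every embedding $K\hookrightarrow D$ has the form $\mathrm{conj}(c)\circ\varphi_0$ with $c\in D^\times$, and a short local check shows that $\mathrm{conj}(c)\circ\varphi_0$ is optimal into $\calO_0^{\widehat{g}}$ precisely when $\widehat{g}c\in\widehat{S}:=\prod_\grp S_\grp$, where $S_\grp:=\{\widehat{x}\in D_\grp^\times:\mathrm{conj}(\widehat{x})\circ\varphi_0\in\Emb(B_\grp,\calO_{0,\grp})\}$; by assumption \eqref{eq:3}, $S_\grp\neq\emptyset$ for every $\grp$. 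Hence $\Emb(B,\calO_0^{\widehat{g}})\neq\emptyset$ iff $\widehat{g}\in\widehat{S}\,D^\times$, and therefore $E(B,\scrG)$ is the image of $\widehat{S}$ in $\Tp(\scrG)$. The crucial input special to Eichler orders is the local statement: for each finite $\grp$, $S_\grp\subseteq N(\calO_{0,\grp})\,\varphi_0(K_\grp^\times)$; equivalently $m(B_\grp,\calO_{0,\grp},\calO_{0,\grp}^\times)\le 2$, and when it equals $2$ the two $\calO_{0,\grp}^\times$-orbits of optimal embeddings are interchanged by the local Atkin--Lehner element — which one proves through the action of the torus $K_\grp^\times$ and of $N(\calO_{0,\grp})$ on the Bruhat--Tits tree of $\PGL_2(F_\grp)$ (the ramified primes, where $\calO_{0,\grp}$ is maximal, being trivial). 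Since $N(\widehat{\calO_0})$ dies in the abelian group $N(\widehat{\calO_0})\backslash\widehat{D}^\times/D^\times$, the image of $\widehat{S}$ equals the image of $\varphi_0(\widehat{K}^\times)$, so that under $\mathrm{nrd}$
\[
  E(B,\scrG)\ =\ \text{the image of }\Nm_{K/F}(\widehat{K}^\times)\text{ under }\widehat{F}^\times\twoheadrightarrow\Gal(\Sigma_\scrG/F).
\]

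Next I would identify this image by class field theory: by functoriality of the Artin reciprocity map with respect to $\Nm_{K/F}$ (using that $K/F$ is abelian of degree $2$), the image of $\Nm_{K/F}(\widehat{K}^\times)$ in $\Gal(\Sigma_\scrG/F)$ is exactly $\Gal(\Sigma_\scrG/(\Sigma_\scrG\cap K))$. As $[K:F]=2$, only two cases occur. If $K\not\subseteq\Sigma_\scrG$ then $\Sigma_\scrG\cap K=F$, so $E(B,\scrG)=\Tp(\scrG)$: every order in $\scrG$ admits an optimal embedding of $B$, and $B$ is not selective. If $K\subseteq\Sigma_\scrG$ then $\Sigma_\scrG\cap K=K$, so $E(B,\scrG)=\Gal(\Sigma_\scrG/K)$, a subgroup of index $2$; since $E(B,\scrG)$ is nonempty (it contains $[\calO_0]$) and proper, $B$ is selective and $\Emb(B,\calO)\neq\emptyset$ for exactly half of the types. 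This gives (i) and (ii). For (iii), fix $[\calO]\in E(B,\scrG)$; by the previous paragraph one may take $\calO=\calO_0^{\varphi_0(\widehat{k})}$ for some $\widehat{k}\in\widehat{K}^\times$, and since $\widehat{K}^\times$ centralizes $\varphi_0(K)$ the embedding $\varphi_0$ is again optimal into $\calO$. Unwinding the description above, $m(B,\calO,\calO^\times)$ equals the cardinality of a double coset set $\calO^\times\backslash(D^\times\cap\varphi_0(\widehat{k})^{-1}\widehat{S})/\varphi_0(K^\times)$, and conjugation by $\varphi_0(\widehat{k})$ — which transports $(\varphi_0,\calO_0)$ to $(\varphi_0,\calO)$ while fixing $\varphi_0$ on $K$ — matches it bijectively with the corresponding set for $\calO_0$; equivalently, the resulting expression for $m(B,\calO,\calO^\times)$ is a fixed index (depending only on $B$, $K$, $\scrG$) times $\prod_\grp m(B_\grp,\calO_\grp,\calO_\grp^\times)$, hence is constant on $E(B,\scrG)$. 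Thus $m(B,\calO,\calO^\times)=m(B,\calO_0,\calO_0^\times)$ whenever $[\calO]\in E(B,\scrG)$.

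I expect the principal obstacle to be the local statement for Eichler orders used above — that every local optimal embedding of $B_\grp$ into $\calO_{0,\grp}$ is $N(\calO_{0,\grp})\varphi_0(K_\grp^\times)$-conjugate to $\varphi_0$, so that a possible second $\calO_{0,\grp}^\times$-orbit genuinely collapses in the global abelian quotient rather than enlarging $E(B,\scrG)$. This is exactly the point at which the argument fails for general orders, and where earlier treatments restricted to square-free level; its proof rests on the explicit lattice-chain description of local Eichler orders. A lesser, routine nuisance is bookkeeping the archimedean positivity conditions built into the reciprocity isomorphism of the first paragraph, and for (iii) a little care is needed with the normalizer of $\calO$ versus its unit group when making the claimed orbit bijection fully precise.
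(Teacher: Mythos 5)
The paper does not give a direct proof of this theorem — it is quoted from Voight — but the machinery of Section~\ref{sec:optim-spin-select} (and Section~\ref{sec:spinor-trace-formula} for part (iii)) is exactly the paper's own route to its generalizations, and your proposal runs along the same adelic/class-field-theoretic lines: identify $\Tp(\scrG)$ with $\Gal(\Sigma_\scrG/F)$ via reduced norm and strong approximation, reduce the description of the selected set to the image of $\Nm_{K/F}(\whK^\times)$, and isolate a purely local input at each $\grp$ that is special to Eichler orders (the paper's Proposition~\ref{prop:voight-eichler}, i.e.\ Voight's Proposition~31.5.7). Your computation of the image via functoriality of Artin reciprocity is a transparent substitute for the paper's ``two class fields'' presentation through $E_\op$ and (\ref{eq:5}), and it gives (i)--(ii) in essentially the same way Corollary~\ref{cor:2case-Eop} does.

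There are two places where precision slips, though neither is fatal to the strategy. First, you assert the set containment $S_\grp\subseteq N(\calO_{0,\grp})\varphi_0(K_\grp^\times)$, ``equivalently $m(B_\grp,\calO_{0,\grp},\calO_{0,\grp}^\times)\le 2$''. This is too strong: for Eichler orders of higher level $\grp^e$ with $K_\grp/F_\grp$ ramified, $m_\grp$ can exceed $2$, so the containment as stated fails. What is true, and is all the argument actually uses (since you only need the \emph{image} in $\Gal(\Sigma_\scrG/F)$ computed via reduced norm), is the norm-level statement $\Nr(\calE_\grp)=\Nr(K_\grp^\times)\,\Nr(\calN(\calO_\grp))$; this is precisely the paper's Proposition~\ref{prop:voight-eichler}. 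Second, for (iii) the reduction to $\calO=\calO_0^{\varphi_0(\hk)}$ is not automatic: a type in the selected set is represented by an idele lying in $\varphi_0(\whK^\times)\calN(\wcO_0)D^\times\whD^1$ (after applying Eichler's theorem/strong approximation), so one must also carry a normalizer factor. The clean version of your bijection is left-multiplication by $\varphi_0(\hk)$ and right-multiplication by $u\in\calN(\wcO_0)$, exploiting that $\whE$ is left-$\whK^\times$ and right-$\calN(\wcO_0)$ invariant; that is exactly the mechanism in the paper's proof of Proposition~\ref{prop:spinor-trace-formula} (see (\ref{eq:134})--(\ref{eq:136})), specialized via Remark~\ref{rem:nonzero}(a) to the case where each spinor class is a single type. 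With these two corrections your proposal matches the paper's method.
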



In this paper, we obtain a number of generalizations of the above
theorem. Our main theorem (Theorem~\ref{thm:selectivity}) gives  the
necessary and sufficient conditions for   optimal  selectivity that applies to any arbitrary genus of
  quaternion orders. The global condition $K\subseteq \Sigma_\scrG$ is
  still necessary, but the sufficiency condition requires additional
  local considerations at  finitely many places where $\calO$ has
  Eichler invariant zero (See Definition~\ref{defn:eichler-invariant}).
  On one hand, this criterion of optimal selectivity shows that Theorem~\ref{thm:Voight31}
  extends verbatim to orders in $D$ that have nonzero
  Eichler invariants at all finite places of $F$ (which include the
  Eichler orders as a proper subclass). On the other hand,
  it paves the way for the discovery in \cite{peng-xue:select} of 
 pairs $(B, \scrG)$ for which the condition
  $K\subseteq \Sigma_\scrG$ is insufficient for optimal selectivity.

When $B$ is optimally selective for the
  genus $\scrG$, there is a method determining all the types
  $[\calO]\in \Tp(\scrG)$ with $\Emb(B, \calO)\neq \emptyset$ using the
  Artin symbol, provided one of such type is known prior.  See
  \cite[Lemma~2.2]{M.Arenas-et.al-opt-embed-trees-JNT2018},
  \cite[\S31.1.9]{voight-quat-book}, or Theorem~\ref{thm:selectivity} of the present
  paper.  However, in practice it might not be easy to construct
  explicitly an order 
 $\calO_0\in \scrG$ with
  $\Emb(B, \calO_0)\neq \emptyset$.  In the study of optimal
  selectivity for Eichler orders of square-free levels, Maclachlan
  \cite{Maclachlan-selectivity-JNT2008} had the novel idea of relating the
  optimal selectivity of two distinct orders $B, B'\subset K$ via
  their relative conductor (See (\ref{eq:120})). For
  example, to know whether $\Emb(B,\calO)=\emptyset $ or not for an
  Eichler order $\calO$ of square-free level, we can
  choose a suitable $F$-embedding $\varphi: K\to D$ and put
  $B':=\varphi^{-1}(\calO)$. Then  the  relative conductor $\grf(B'/B)$ encodes the desired
  information for $\Emb(B,\calO)$. In Theorem~\ref{thm:maclachlan}, we shall
  remove the square-free condition and extend Maclachlan's result to
  all Eichler orders. 



  Built on the previous work of Vign\'eras \cite[\S III.5]{vigneras},
  Voight gives an explicit formula for $m(B, \calO, \calO^\times)$ in
  \cite[Corollary~31.1.10]{voight-quat-book} (cf.~\eqref{eq:11}) by combing part (iii) of
  Theorem~\ref{thm:Voight31} together with \emph{the trace formula}
  \cite[Theorem~III.5.11]{vigneras}
  \cite[Theorem~30.4.7]{voight-quat-book}. Here the Eichler condition
  is essential as remarked by him in
  \cite[Remark~31.6.2]{voight-quat-book}. On the other hand, when
  studying certain class number formulas attached to orders in totally
  definite quaternion algebras (i.e.~quaternion algebras that do not
  satisfy the Eichler condition), the present authors are confronted
  with the task of generalizing the Vign\'eras-Voight formula to the
  totally definite case. As a prerequisite, we need a version of the
  optimal selectivity theorem that applies to the totally definite
  case. This in itself is not new. In \cite[Remark,
  p.~99]{M.Arenas-et.al-opt-embed-trees-JNT2018}, M.~Arenas et
  al.~remark that the first two parts of Theorem~\ref{thm:Voight31}
  still hold as soon as the words ``conjugate class'' and
  ``conjugate'' are replaced by ``spinor genera'' and by ``spinor
  equivalence'' respectively.  We shall take this approach and
  formulate all our results in terms of \emph{optimal spinor
    selectivity} (see Definition~\ref{defn:oss}). Such a formulation 
  allows us to get a similar result as part (iii) of
  Theorem~\ref{thm:Voight31}, except that both sides of the equality
  are of the form $\sum_i m(B, \calO_i, \calO_i^\times)$, where the
  $\calO_i$'s range over the left orders of right $\calO$-ideal
  classes within one spinor class (See (\ref{eq:132})).  This leads to a refinement of the
  classical trace formula, which will be called the \emph{spinor trace
    formula} in Proposition~\ref{prop:spinor-trace-formula}. If $D$ satisfies the Eichler condition,
  then the summation consists of only one term, and the spinor trace
  formula reduces back to the Vign\'eras-Voight formula.

  This paper is organized as follows. In
  Section~\ref{sec:optim-spin-select} we prove the optimal spinor
  selectivity theorem for an arbitrary genus of quaternion orders,
  thus generalizing the first two parts of
  Therorem~\ref{thm:Voight31}.  Section~\ref{sec:maclachlan} focuses
  on extending Maclachlan's relative conductor formula to all Eichler
  orders. In Section~\ref{sec:spinor-trace-formula}, we prove the
   spinor trace formula, which  generalizes the last part of
  Therorem~\ref{thm:Voight31}.


\textbf{Notation.}  Throughout this paper, $\grp$ denotes a
finite prime of $F$.  If $M$
is a finite dimensional $F$-vector space or a finite $O_F$-module, then we
write $M_\grp$ for the $\grp$-adic completion of $M$.   Let
$\wbZ=\varprojlim \zmod{n}=\prod_p \Z_p$ be the profinite completion
of $\Z$. If $X$ is a finitely generated $\Z$-module or a finite
dimensional $\Q$-vector space, we set $\wh X:=X\otimes_\Z\wbZ$. For
example, $\whF$ is the ring of finite adeles of $F$, and
$\whO_F=\prod_\grp O_{F_\grp}$. Here $O_{F_\grp}$ denotes the ring of integers of
$F_\grp$, and the product runs over all finite
primes of $F$.  The reduced norm map of
$D$ is denoted as $\Nr: D\to F$. 
Given a set $Y\subseteq
\whD$, we write $Y^1$ for the subset of elements of reduced norm
$1$, that is,  $Y^1:=\{y\in Y\mid \Nr(y)=1\}$. In particular, $\whD^1=\ker(\whD^\times \to \whF^\times)$.





\section{The optimal spinor selectivity theorem}  
\label{sec:optim-spin-select}

In this section, we derive the aforementioned theorem (Theorem~\ref{thm:selectivity}) of optimal spinor
selectivity that applies to any arbitrary genus of quaternion orders. Our new contribution to this already highly developed theory mostly  includes
verifying certain local condition for optimal embeddings in Lemma~\ref{lem:eichler-inv-minus1} at the places
of $F$
where the  Eichler invariant of the quaternion order is $-1$.  This
provides simplifications to the existing theory and makes
generalizations possible.

Different from Theorem~\ref{thm:Voight31}, the quaternion algebra $D$
in this section is allowed to be arbitrary (i.e.~not necessarily satisfying
the Eichler condition). By definition, $D$ is \emph{totally definite}
if and only if $F$ is a totally real field and
$D\otimes_{F, \sigma}\bbR$ is isomorphic to the Hamilton quaternion
algebra $\bbH$ for every embedding $\sigma: F\hookrightarrow \bbR$.
In the totally definite case, Theorem~\ref{thm:Voight31} does not
always hold without modifications, as shown by the following simple example.

\begin{ex}\label{ex:Dpinf}
  Let $p\in \bbN$ be a prime, and  $D_{p, \infty}$ be the unique quaternion
algebra over $\bbQ$ ramified precisely at  $p$ and
$\infty$. Let $\scrG_{p, \infty}$ be the genus of maximal $\Z$-orders
in $D_{p, \infty}$.  From \cite[Proposition~V.3.2]{vigneras}, if
$p\equiv 3\pmod{4}$, then there is a unique member
$[\calO_0]\in \Tp(\scrG_{p, \infty})$ such that
$\Emb(\Z[\sqrt{-1}], \calO_0)\neq \emptyset$, while
$\abs{\Tp(\scrG_{p, \infty})}$ tends to infinity as $p$ goes to
infinity.  This already violates part (ii) of Theorem~\ref{thm:Voight31}.
\end{ex}

As remarked by M.~Arenas et al.~\cite[Remark,
  p.~99]{M.Arenas-et.al-opt-embed-trees-JNT2018}, one needs the notion
  of \emph{spinor genus} to extend the optimal selectivity theorem to
  the totally definite case. This notion was previous studied by
  Brzezinski 
  \cite[\S1]{Brzezinski-Spinor-Class-gp-1983}. 

\begin{defn}\label{defn:spinor-genus}
  (1) Let $\scrG$ be an arbitrary genus of orders in $D$.  Two orders
  $\calO, \calO'\in \scrG$ are said to be in the same \emph{spinor
    genus} (and denoted by $\calO\sim \calO'$) if there exists
  $x\in D^\times\whD^1$ such that $\wcO'=x\wcO x^{-1}$. 

(2) The  \emph{spinor genus} of $\calO$ is the set $[\calO]_\sg$ consisting
  of all orders $\calO'$ with $\calO'\sim \calO$. The set of spinor
  genera within $\scrG$ is denoted by $\SG(\scrG)$, that is, 
  $\SG(\scrG):=\{[\calO]_\sg\mid \calO\in \scrG\}$. Often we write
  $\SG(\calO)$ for $\SG(\scrG)$ and regard it as a pointed set
  with base point $[\calO]_\sg$. 
 
 (3) Given orders
  $B\subseteq K$ and $\calO\in \scrG$, we define the \emph{optimal
    spinor selectivity symbol} as follows
\begin{equation}
  \label{eq:6}
  \Delta(B, \calO)=
  \begin{cases}
    1 \qquad &\text{if } \exists\, \calO'\in [\calO]_\sg\text{ such that }\Emb(B, \calO')\neq \emptyset, \\
    0 \qquad &\text{otherwise}.
  \end{cases}
  \end{equation}
\end{defn}


\begin{rem}\label{rem:Eichler-con}
If  $D$ satisfies the Eichler condition, Brzezinski
\cite[Proposition~1.1]{Brzezinski-Spinor-Class-gp-1983} shows that each spinor genus
consists of exactly one type. Thus in this case, $\Delta(B, \calO)=1$
if and only if $\Emb(B, \calO)\neq \emptyset$. 
\end{rem}

\begin{defn}\label{defn:oss}
  We say $B$ is \emph{optimally spinor selective} (selective for
  short) for  
  the genus $\scrG$ if $\Delta(B, \calO')=0$ for some but not all $[\calO']_\sg\in \SG(\scrG)$. If $B$ is selective for $\scrG$, then a
  spinor genus $[\calO]_\sg$ with $\Delta(B, \calO)=1$
  is said to be \emph{selected} by $B$.
\end{defn}

To give a preliminary characterization  of the  spinor genera  selected by
$B$, we take the ``two class field'' approach as mapped out by
Arenas-Carmona \cite[\S3]{Arenas-Carmona-Spinor-CField-2003} and
combine it with inputs from Voight's work \cite[\S31]{voight-quat-book}. 

First, let us introduce the class field $\Sigma_\scrG/F$ attached to
the genus $\scrG$ as mentioned in the introduction. 
 Following
\cite[\S III.4]{vigneras}, we write $F_D^\times$ for the subgroup
of $F^\times$ consisting of the elements that are positive at each
infinite place of $F$ ramified in $D$.  
Let $\Nr: D^\times\to F^\times$ be the reduced norm map. 
The Hasse-Schilling-Maass
theorem \cite[Theorem~33.15]{reiner:mo}
\cite[Theorem~III.4.1]{vigneras} implies that
$  \Nr(D^\times)=F_D^\times$.
Moreover, we have $\Nr(\whD^\times)=\whF^\times$, the finite idele
group of $F$.   Let  $\calN(\wcO)$ be the normalizer of $\wcO$
 in $\whD^\times$. There is an adelic description of $\SG(\calO)$  as follows
\cite[Propositions~1.2 and 1.8]{Brzezinski-Spinor-Class-gp-1983}
\begin{equation}
  \label{eq:118}
\SG(\calO)\simeq  (\grD^\times\whD^1)\bsh \whD^\times/\calN(\wcO)\xrightarrow[\simeq]{\Nr}
  F_\grD^\times\bsh \whF^\times/\Nr(\calN(\wcO)), 
\end{equation}
where the  two double coset spaces are canonically bijective via
the reduced norm map.  Clearly, the
group $\Nr(\calN(\wcO))$ depends only on the genus $\scrG$ and not on
the choice of $\calO$.


\begin{defn}[{\cite[\S2]{Arenas-Carmona-Spinor-CField-2003},
    \cite[\S3]{Linowitz-Selectivity-JNT2012}}] \label{defn:spinor-field}
  The \emph{spinor genus field}\footnote{This field is often
    called the \emph{spinor class field} in the literature
    \cite{M.Arenas-et.al-opt-embed-trees-JNT2018, Arenas-Carmona-Spinor-CField-2003, Arenas-Carmona-2013,
      Arenas-Carmona-cyclic-orders-2012}. However,  we are going to
    introduce a concept called \emph{spinor class} following
    \cite[\S1]{Brzezinski-Spinor-Class-gp-1983}.  To avoid confusion,  the field
    $\Sigma_\scrG$ will be called the spinor genus field since it is
    uniquely determined by the genus $\scrG$. Moreover, if $F$ is a
    quadratic field, and $\calO$ is an Eichler order, then
    $\Sigma_\scrG$ is a subfield of the classical \emph{(strict) genus
      field} in \cite[Definition~15.29]{Cohn-invitation-Class-Field}
    or \cite[\S6]{Cox-Primes}, so the terminology is consistent in
    that sense. } of $\scrG$ is the abelian field extension
  $\Sigma_\scrG/F$ corresponding to the open subgroup
  $F_D^\times\Nr(\calN(\wcO))\subseteq \whF^\times$ via the class
  field theory \cite[Theorem~X.5]{Lang-ANT}. 
\end{defn}
Since $\Nr(\calN(\wcO))$ is an open subgroup of
$\whF^\times$ containing $(\whF^\times)^2$, the Galois group $\Gal(\Sigma_\scrG/F)$ is a
finite elementary $2$-group
\cite[Proposition~3.5]{Linowitz-Selectivity-JNT2012}.
We have a canonical identification of the pointed sets
\begin{equation}
  \label{eq:8}
\SG(\calO)\simeq  F_D^\times\bsh \whF^\times/
  \Nr(\calN(\wcO))\simeq \Gal(\Sigma_\scrG/F), 
\end{equation}
where the base point $[\calO]_\sg$ is identified with the identity
element of $\Gal(\Sigma_\scrG/F)$. This equips $\SG(\calO)$ with an abelian group structure. 
Given another order $\calO'\in \scrG$, we define $\rho(\calO, \calO')$
to be the element of $\Gal(\Sigma_\scrG/F)$ identified with
$[\calO']_\sg\in \SG(\calO)$  via (\ref{eq:8}). 
  More canonically, we regard the base
point free set $\SG(\scrG)$ as a principal homogeneous space over
$\Gal(\Sigma_\scrG/F)$ via (\ref{eq:8}). Then $\rho(\calO, \calO')$ is
the unique element of $\Gal(\Sigma_\scrG/F)$ that sends $[\calO]_\sg$
to $[\calO']_\sg$.  Since $\Gal(\Sigma_\scrG/F)$ is an elementary
$2$-group,  $\rho$ defines a map $\scrG\times
\scrG\to \Gal(\Sigma_\scrG/F)$ that is symmetric in its two
variables. By definition, $\rho(\calO, \calO')$ depends only on the spinor genera of $\calO$ and $\calO'$.   Moreover, $\rho(\calO, \calO')=1$ if and only if
$\calO\sim \calO'$. If we write the group law of
$\Gal(\Sigma_\scrG/F)$ additively, then  
\begin{equation}\label{eq:26}
\rho(\calO, \calO'')=\rho(\calO, \calO')+\rho(\calO', \calO''), \quad
\forall \calO, \calO', \calO''\in \scrG.   
\end{equation}

\begin{ex}\label{ex:link-ideal}
Suppose for the moment that
$\Nr(\calO_\grp^\times)=O_{F_\grp}^\times$ for every prime
$\grp\subset O_F$ so that $\Sigma_\scrG/F$ is unramified at all the finite
places.   For instance, this assumption holds for all Eichler
orders in $D$. 
  Let $\calO$ and $\calO'$ be two members of
  $\scrG$. There exists an $O_F$-lattice $I$ in $\grD$ linking $\calO$
  and $\calO'$ as in \cite[\S I.4]{vigneras}, that is,
  $\calO'=\calO_l(I)$ and $\calO=\calO_r(I)$, where 
  \begin{equation}
    \label{eq:12}
   \calO_l(I):=\{\alpha\in D\mid \alpha I\subseteq I\}, \quad
   \calO_r(I):=\{\alpha\in D\mid  I \alpha\subseteq I\}. 
  \end{equation}
Such a lattice $I$ is 
  locally principal both as a fractional right $\calO$-ideal and   as a
  fractional left $\calO'$-ideal.  Then
  $\rho(\calO, \calO')\in \Gal(\Sigma_\scrG/F)$ is given by the Artin symbol
  $(\Nr(I), \Sigma_\scrG/F)$. See
  \cite[\S2]{M.Arenas-et.al-opt-embed-trees-JNT2018} and
  \cite[\S31.1.9]{voight-quat-book}. 
\end{ex}



Next, we introduce another class field $E_\op/F$ closely related to the optimal
embeddings. 
For each $F$-embedding
$\varphi: K\hookrightarrow \grD$, consider the 
following sets
\begin{align}
  \label{eq:25}
\calE(\varphi, B, \calO)&:=\{\alpha\in \grD^\times\mid
  \varphi(K)\cap \alpha\calO \alpha^{-1}=\varphi(B)\},\\
  \calE_\grp(\varphi, B, \calO)&:=\{g_\grp\in \grD_\grp^\times \mid \varphi(K_\grp)\cap g_\grp\calO_\grp
  g_\grp^{-1}=\varphi(B_\grp)\}, \quad \forall \text{ prime }
                                 \grp\subset O_F, \label{eq:159}\\ 
  \label{eq:158}  \whE(\varphi, B, \calO)&:=\{g=(g_\grp)\in \whD^\times \mid \varphi(K)\cap g\wcO
  g^{-1}=\varphi(B)\}. 
\end{align}
 Given any other $F$-embedding
$\varphi':K\hookrightarrow \grD$ and another order $\calO'\in \scrG$, we
pick $\alpha\in \grD^\times$ and $x\in \whD^\times$ such that
$\varphi'=\alpha^{-1} \varphi \alpha$ and $\wcO'=x\wcO x^{-1}$. Then 
\begin{equation}
  \label{eq:170}
\whE(\varphi', B, \calO')=\alpha^{-1}\whE(\varphi, B, \calO)x^{-1}. 
\end{equation}

Since condition (\ref{eq:3}) is assumed  to hold throughout this
paper, there is always an order $\calO\in \scrG$ such that $\Emb(B,
\calO)\neq \emptyset$. Keep such an $\calO$ and an optimal
embedding $\varphi\in \Emb(B, \calO)$ fixed. 
 For simplicity, we identify $K$
with its image $\varphi(K)\subset \grD$. Clearly,
$\whE(\varphi, B, \calO)$ is left translation invariant by
$\whK^\times$ and right translation invariant by
$\calN(\wcO)$. Moreover, it contains $1$ by the choice of $\calO$ and $\varphi$,  so we
have 
\begin{equation}
  \label{eq:167}
  \whE:=\whE(\varphi, B, \calO)\supseteq \whK^\times\calN(\wcO). 
\end{equation}

\begin{defn}[{\cite[\S2]{Arenas-Carmona-Spinor-CField-2003},
    \cite[\S3]{Linowitz-Selectivity-JNT2012}}] \label{defn:op-rep-field}
  The \emph{optimal representation field} attached to the pair $(B, \scrG)$ is the abelian field extension
  $E_\op/F$ corresponding to $F_D^\times\Nr(\whE)\subseteq \whF^\times$ via the class
  field theory.  
\end{defn}

A priori, for this definition to make sense, one needs to know that
$F_D^\times\Nr(\whE)$ is an open subgroup of
$\whF^\times$. This has already been verified by Voight
\cite[\S 31.3.14]{voight-quat-book}, who also shows  that $[\whF^\times: F_\grD^\times\Nr(\whE)]\leq 2$.  
His proof
makes use of the
following chain of inclusions 
\begin{equation}
  \label{eq:173}
  F_K^\times\Nr(\whK^\times)\subseteq   F_\grD^\times
  \Nr(\whK^\times)\Nr(\calN(\wcO))\subseteq
  F_\grD^\times\Nr(\whE)\subseteq \whF^\times.
\end{equation}
Here  $F_K^\times$ is the subgroup
of $F^\times$ consisting of the elements
that are positive at each infinite place of $F$ that is ramified in
$K/F$. The assumption that $K$ is $F$-embeddable into $\grD$ implies
that $ F_K^\times\subseteq F_\grD^\times$.   Observe that $F_K^\times\Nr(\whK^\times)$ is an open subgroup of $\whF^\times$ of index $2$ by the class field theory. In fact, this is the key idea behind Voight's arguments.  


We claim that
the 
group $F_D^\times\Nr(\whE)$ is uniquely determined by the pair
$(B, \scrG)$.  In other words, if $\calO'\in \scrG$ is another order
with $\varphi'\in \Emb(B, \calO')$, then
$F_D^\times\Nr(\whE')= F_\grD^\times\Nr(\whE)$, where 
$\whE':=\whE(\varphi', B, \calO')$.  Indeed,  from (\ref{eq:170}), we
have  $F_D^\times\Nr(\whE')=
F_\grD^\times\Nr(\whE)\Nr(\alpha x)^{-1}$ for suitable $\alpha\in
D^\times$ and $x\in \whD^\times$.  On the other hand,  
 $\alpha
x\in \whE$ by definition  (\ref{eq:158}). The claim is verified 
since $F_\grD^\times\Nr(\whE)$ is
a subgroup of $\whF^\times$.

At each finite prime
$\grp$ of $F$,  put $\calE_\grp:=\calE_\grp(\varphi, B, \calO)$  for
simplicity. There is  a similar chain of
inclusions as the one in (\ref{eq:173}): 
\begin{equation}
  \label{eq:155}
\Nr(K_\grp^\times)\subseteq
\Nr(K_\grp^\times)\Nr(\calN(\calO_\grp))\subseteq
\Nr(\calE_\grp)\subseteq F_\grp^\times.
\end{equation}
The same argument as those in \cite[\S31.3.14]{voight-quat-book} shows
that $\Nr(\calE_\grp)$ is a subgroup of $F_\grp^\times$ of index at
most $2$. Moreover, $\Nr(\calE_\grp)$ depends only on the isomorphism classes of the local orders $B_\grp$ and
$\calO_\grp$ by a local version of the calculation above.



Note that the first two terms of (\ref{eq:173})
 correspond via the class field theory to $K$ and $K\cap \Sigma_\scrG$
 respectively. It follows that 
 \begin{equation}
     \label{eq:5}
     K\supseteq K\cap \Sigma_\scrG \supseteq E_\op \supseteq F.
 \end{equation}
 

The following lemma is adapted from
\cite[Lemma~2.2]{M.Arenas-et.al-opt-embed-trees-JNT2018} (see also
\cite[Theorem~3.2]{Maclachlan-selectivity-JNT2008} and 
\cite[Proposition~31.4.4]{voight-quat-book}), so we omit its proof.  
 \begin{lem}\label{lem:prim-criterion}
   Let $\calO\in \scrG$ be an order such that there exists $\varphi\in
   \Emb(B, \calO)$. Given another order $\calO'\in \scrG$, we have
   $\Delta(B, \calO')=1$ if and only if $\rho(\calO, \calO')\in \Gal(\Sigma_\scrG/F)$
   restricts to identity on $E_\op$. 
 \end{lem}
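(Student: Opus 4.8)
The plan is to recast both conditions appearing in the lemma as membership statements in one common double-coset space, and then to match them by class field theory using the two adelic descriptions already in place: (\ref{eq:118}) for the spinor genus field $\Sigma_\scrG$, and Definition~\ref{defn:op-rep-field} for the optimal representation field $E_\op$.

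First I would normalize the embedding. Assume $\Delta(B,\calO')=1$, witnessed by some $\calO'''\in[\calO']_\sg$ and some $\psi\in\Emb(B,\calO''')$. By the Skolem--Noether theorem $\psi=\alpha^{-1}\varphi\alpha$ for some $\alpha\in\grD^\times$, and conjugating by $\alpha$ shows $\varphi\in\Emb(B,\alpha\calO'''\alpha^{-1})$; moreover $\alpha\calO'''\alpha^{-1}$ lies in the same spinor genus as $\calO'''$, hence in $[\calO']_\sg$, because $\alpha\in\grD^\times\subseteq\grD^\times\whD^1$. The converse implication being trivial, the lemma reduces to the assertion that $\Delta(B,\calO')=1$ if and only if there is an order $\calO''\in\scrG$ with $[\calO'']_\sg=[\calO']_\sg$ and $\varphi\in\Emb(B,\calO'')$.

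Next I would parametrize $\scrG$ adelically and translate this condition. Every $\calO''\in\scrG$ is of the form $\wcO''=g\wcO g^{-1}$ for some $g\in\whD^\times$ (local isomorphisms of quaternion orders are local conjugations, and one may take $g_\grp=1$ for all but finitely many $\grp$), with $g$ well-defined modulo $\calN(\wcO)$ on the right; this is compatible with (\ref{eq:118}), so that $[\calO'']_\sg=[\calO']_\sg$ becomes, after applying $\Nr$ and using (\ref{eq:8}), the condition $\Nr(g)\equiv\rho(\calO,\calO')$ in $F_D^\times\bsh\whF^\times/\Nr(\calN(\wcO))\simeq\Gal(\Sigma_\scrG/F)$. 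Since orders are determined by their completions and a lattice intersection can be tested prime by prime, the global condition $\varphi(K)\cap\calO''=\varphi(B)$ is equivalent to $\varphi(K)\cap g\wcO g^{-1}=\varphi(B)$ in $\whD$, that is, to $g\in\whE$ by the definition (\ref{eq:158}). Combining this with the normalization, $\Delta(B,\calO')=1$ holds if and only if $\rho(\calO,\calO')$ lies in the image of $\Nr(\whE)$ under the Artin map $\whF^\times\to F_D^\times\bsh\whF^\times/\Nr(\calN(\wcO))\simeq\Gal(\Sigma_\scrG/F)$. Since $F_D^\times\Nr(\whE)$ is a subgroup of $\whF^\times$ containing $F_D^\times\Nr(\calN(\wcO))$ --- using $\calN(\wcO)\subseteq\whE$ from (\ref{eq:167}) and the subgroup property of $F_D^\times\Nr(\whE)$ recorded around (\ref{eq:173}) --- that image equals $F_D^\times\Nr(\whE)/F_D^\times\Nr(\calN(\wcO))$, which by Definition~\ref{defn:op-rep-field} is precisely $\Gal(\Sigma_\scrG/E_\op)$, i.e.\ the set of elements of $\Gal(\Sigma_\scrG/F)$ restricting to the identity on $E_\op$. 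This is the claimed equivalence.

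The step I expect to be the main obstacle is the parametrization: one must verify that the identification $\scrG\leftrightarrow\whD^\times/\calN(\wcO)$ is simultaneously compatible with the spinor-genus description (\ref{eq:118}) and with the set $\whE$, and that the passage from the global optimality condition $\varphi\in\Emb(B,\calO'')$ to the adelic condition $g\in\whE$ is legitimate --- this is where one exploits that both orders and lattice intersections are local objects. Granting this dictionary, the normalization step is just Skolem--Noether, and the final step is pure class field theory together with the inclusions (\ref{eq:167}) and (\ref{eq:173}) recorded earlier.
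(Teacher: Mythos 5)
Your proof is correct. The paper omits its own proof of this lemma (citing Arenas et al., Maclachlan, and Voight), and your argument is precisely the standard adelic/class-field-theoretic one found in those references: normalize the embedding via Skolem--Noether, parametrize $\scrG$ by $\whD^\times/\calN(\wcO)$ compatibly with (\ref{eq:118}) and (\ref{eq:158}), and observe that the image of $\Nr(\whE)$ in $F_D^\times\bsh\whF^\times/\Nr(\calN(\wcO))\simeq\Gal(\Sigma_\scrG/F)$ is $F_D^\times\Nr(\whE)/F_D^\times\Nr(\calN(\wcO))\simeq\Gal(\Sigma_\scrG/E_\op)$, using $\calN(\wcO)\subseteq\whE$ from (\ref{eq:167}) and the fact that $F_D^\times\Nr(\whE)$ is a subgroup. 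The translation from the global optimality condition $\varphi\in\Emb(B,\calO'')$ to $g\in\whE$ (where $\wcO''=g\wcO g^{-1}$) is immediate from the definition (\ref{eq:158}), since $\varphi(K)\cap\calO''=\varphi(K)\cap D\cap g\wcO g^{-1}=\varphi(K)\cap g\wcO g^{-1}$, so the obstacle you flagged is harmless.
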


Since $[K: F]=2$, the field $E_\op$ coincides with either $F$ or $K$
by (\ref{eq:5}). In light of the 
  the identification $\SG(\calO)\simeq  \Gal(\Sigma_\scrG/F)$ in
  (\ref{eq:8}), the corollary below follows directly from the above lemma. 

\begin{cor}\label{cor:2case-Eop}
The order   $B\subset K$ is selective for the genus $\scrG$ if and only if
$E_\op=K$. More explicitly, 
\begin{enumerate}[label=(\roman*)]
\item if $E_\op=F$, then $\Delta(B, \calO')=1$ for every spinor genus
  $[\calO']_\sg\in \SG(\scrG)$, and $B$ is non-selective for the genus
  $\scrG$;
\item if $E_\op=K$, then $K\subseteq \Sigma_\scrG$, and exactly half
  of the spinor genera in $\SG(\scrG)$ are selected by $B$.  
\end{enumerate}
In particular, if $K\cap \Sigma_\scrG=F$, then $B$ is non-selective
for the genus $\scrG$.  
\end{cor}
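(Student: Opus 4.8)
The plan is to read everything off from Lemma~\ref{lem:prim-criterion}, using the chain $F\subseteq E_\op\subseteq K\cap\Sigma_\scrG\subseteq K$ of (\ref{eq:5}) and the identification $\SG(\calO)\simeq\Gal(\Sigma_\scrG/F)$ of (\ref{eq:8}). First I would fix an order $\calO\in\scrG$ together with an optimal embedding $\varphi\in\Emb(B,\calO)$ --- this exists because condition (\ref{eq:3}) is assumed throughout --- and use it both to define $E_\op$ and to base-point $\SG(\calO)\simeq\Gal(\Sigma_\scrG/F)$. Since $[K:F]=2$, the chain (\ref{eq:5}) leaves only the two possibilities $E_\op=F$ and $E_\op=K$, and I would treat these separately.

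In the case $E_\op=F$, every automorphism in $\Gal(\Sigma_\scrG/F)$ restricts to the identity on $F$, so Lemma~\ref{lem:prim-criterion} immediately yields $\Delta(B,\calO')=1$ for all $\calO'\in\scrG$; hence every spinor genus is selected and $B$ is non-selective, which is part (i). In the case $E_\op=K$, I would first note $K=E_\op\subseteq K\cap\Sigma_\scrG$, whence $K\subseteq\Sigma_\scrG$ and $H:=\Gal(\Sigma_\scrG/K)$ is a subgroup of $G:=\Gal(\Sigma_\scrG/F)$ of index $[K:F]=2$. By Lemma~\ref{lem:prim-criterion}, $\Delta(B,\calO')=1$ precisely when $\rho(\calO,\calO')$ acts trivially on $E_\op=K$, i.e.\ when $\rho(\calO,\calO')\in H$. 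Since $\SG(\scrG)$ is a principal homogeneous space over $G$ with $\rho(\calO,\cdot)$ the transition map, and $\rho(\calO,\calO')$ depends only on $[\calO']_\sg$, the selected spinor genera correspond bijectively to $H\subseteq G$; there are $\abs{H}=\abs{G}/2=\abs{\SG(\scrG)}/2$ of them, and as $H$ is a proper subgroup containing the identity (the class of the chosen $\calO$, for which indeed $\Delta(B,\calO)=1$), the order $B$ is selective and exactly half the spinor genera are selected --- part (ii).

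Combining the two cases shows $B$ is selective for $\scrG$ if and only if $E_\op=K$, and the ``in particular'' clause then follows at once: if $K\cap\Sigma_\scrG=F$, then (\ref{eq:5}) squeezes $E_\op$ between $F$ and $F$, so $E_\op=F$ and $B$ is non-selective by (i). I do not anticipate a genuine obstacle here; the only delicate point is the bookkeeping translating ``$\rho(\calO,\calO')$ restricts to the identity on $K$'' into ``$\rho(\calO,\calO')$ lies in the index-$2$ subgroup $\Gal(\Sigma_\scrG/K)$'' and transporting that statement through the base-pointed identification (\ref{eq:8}) to count spinor genera --- but the dictionary needed for this has already been set up in the run-up to the corollary.
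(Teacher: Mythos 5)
Your argument is correct and is essentially the same as the paper's (which the paper only sketches in one sentence): both rest on $[K:F]=2$ together with the chain (\ref{eq:5}) forcing $E_\op\in\{F,K\}$, then feed this into Lemma~\ref{lem:prim-criterion} and the base-pointed identification $\SG(\calO)\simeq\Gal(\Sigma_\scrG/F)$ of (\ref{eq:8}). The only difference is that you write out the index-$2$ subgroup counting explicitly, which the paper leaves implicit.
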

So far everything has been pretty standard: almost all the ideas have
appeared in the literature somewhere. Now we employ the local-global
compatibility of class field theory
\cite[\S6]{Tate-global-CFT-CasselsFrohlich} to reduce it to purely
local considerations.  This will eventually allow us to insert our
own input into the theory.

\begin{lem}\label{lem:K-in-Sigma}
We have  $K\subseteq \Sigma_\scrG$  if and only if both of the
following conditions hold: 
\begin{enumerate}
\item[(i)] $F_K^\times=F_\grD^\times$, or equivalently by weak approximation,  $K$ and $\grD$ are ramified at
exactly the same (possibly empty) set of real places of $F$;
\item[(ii)]  $\Nr(\calN(\calO_\grp))\subseteq \Nr(K_\grp^\times)$ 
 for every finite prime $\grp$ of $F$. 
\end{enumerate}
\end{lem}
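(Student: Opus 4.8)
The plan is to translate the statement ``$K\subseteq\Sigma_\scrG$'' into an inclusion of norm subgroups in $\whF^\times$, split that inclusion into an ``infinite'' part and a ``finite'' part, and match these against conditions (i) and (ii) respectively.

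First I would recall from Definition~\ref{defn:spinor-field} that $\Sigma_\scrG$ is the class field attached to $V:=F_D^\times\Nr(\calN(\wcO))$, while $K$, being the quadratic extension corresponding to the first term of \eqref{eq:173}, is the class field attached to $W:=F_K^\times\Nr(\whK^\times)$; since the Galois correspondence is inclusion-reversing, $K\subseteq\Sigma_\scrG$ is equivalent to $V\subseteq W$. Because $1$ lies in each of $F_D^\times$ and $\Nr(\calN(\wcO))$ and $W$ is a group, $V\subseteq W$ holds if and only if both $F_D^\times\subseteq W$ and $\Nr(\calN(\wcO))\subseteq W$ hold. The crucial preliminary step is to realize $W$ as a kernel: by the product formula for the norm residue symbol, the homomorphism $\mathrm{Art}_K\colon\whF^\times\to\Gal(K/F)$ sending $(x_\grp)_\grp$ to the finite product $\prod_\grp(x_\grp,K_\grp/F_\grp)$ is a well-defined surjection whose kernel is exactly $W$: one has $\Nr(\whK^\times)\subseteq\ker\mathrm{Art}_K$ by local class field theory, $F_K^\times\subseteq\ker\mathrm{Art}_K$ because for $c\in F_K^\times$ the product over the finite places equals the product over the archimedean places, which is trivial since $c$ is positive at every real place ramified in $K$, and since $\mathrm{Art}_K$ is visibly nontrivial (e.g.\ at a prime inert in $K$) while $[\whF^\times:W]=2$, these force $\ker\mathrm{Art}_K=W$.

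With $W=\ker\mathrm{Art}_K$ in hand, the ``finite'' condition is transparent: $\Nr(\calN(\wcO))\subseteq W$ says $(s_\grp,K_\grp/F_\grp)=1$ for every finite $\grp$ and every $s_\grp\in\Nr(\calN(\calO_\grp))$, and testing this against the ideles supported at a single prime (using $1\in\Nr(\calN(\calO_\grp))$ for all $\grp$) shows it is equivalent to $\Nr(\calN(\calO_\grp))\subseteq\ker(\,\cdot\,,K_\grp/F_\grp)=\Nr(K_\grp^\times)$ for every finite $\grp$, which is (ii). For the ``infinite'' condition, the product formula gives, for a diagonally embedded $a\in F^\times$, that $\mathrm{Art}_K(a)$ equals the product of $\sgn(\sigma_v(a))$ over the real places $v$ of $F$ ramified in $K$; hence $a\in W$ precisely when $a$ is positive at an even number of these places. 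If (i) holds, then every $a\in F_D^\times$ is positive at all real places ramified in $K$, so $F_D^\times\subseteq W$; conversely, if (i) fails, then --- using that $K$ embeds into $D$, so that no real place of $F$ is both ramified in $D$ and split in $K$, and hence every real place ramified in $D$ is ramified in $K$ --- there is a real place $v_0$ ramified in $K$ but not in $D$, and by weak approximation there is $a\in F^\times$ negative at $v_0$ and positive at every other archimedean place, so that $a\in F_D^\times$ but $\mathrm{Art}_K(a)\neq 1$, i.e.\ $a\notin W$. Thus $F_D^\times\subseteq W$ is equivalent to (i), and combining the two equivalences yields the lemma.

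I expect the only genuinely delicate points to be the two appeals to the product formula for the norm residue symbol: once to identify $W$ with $\ker\mathrm{Art}_K$ (equivalently, to see via the Hasse norm theorem that an element of $F^\times$ which is a local norm from $K$ at all finite places and positive at the real places ramified in $K$ is automatically a global norm), and once to evaluate $\mathrm{Art}_K$ on principal ideles. Everything else --- the inclusion-reversing Galois correspondence, the single-prime test, and the weak approximation argument --- is routine bookkeeping.
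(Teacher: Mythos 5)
Your proof is correct and follows essentially the same route as the paper's: both reduce $K\subseteq \Sigma_\scrG$ to the inclusion $F_D^\times\Nr(\calN(\wcO))\subseteq F_K^\times\Nr(\whK^\times)$, handle the archimedean part via the comparison of $F_K^\times$ with $F_D^\times$ (condition (i)), and extract condition (ii) by testing ideles supported at a single finite prime against the local--global compatibility of the norm residue symbol (which you re-derive from the product formula where the paper simply cites the commutative diagram). The only blemish is cosmetic: your general criterion ``$a\in W$ precisely when $a$ is positive at an even number of these places'' should read ``negative at an even number,'' though both of your actual applications of it (positive at all such places, resp.\ negative at exactly one) draw the correct conclusion.
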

\begin{proof}


Recall that $F_K^\times\subseteq F_D^\times$ since $K$ embeds into $D$
over $F$ by assumption.   
Clearly, (i) and (ii) guarantee that $F_K^\times
  \Nr(\whK^\times)\supseteq F_\grD^\times
\Nr(\calN(\wcO))$, and hence
 $K\subseteq \Sigma_\scrG$. 

Conversely, suppose that $K\subseteq \Sigma_\scrG$ so that $F_K^\times
  \Nr(\whK^\times)\supseteq F_\grD^\times\Nr(\calN(\wcO))$.
By definition,
$\Sigma_\scrG$ splits completely at all the real places of $F$
that are unramified in $\grD$, and hence (i) necessarily holds.   It follows from $F_K^\times=F_\grD^\times$ that 
\begin{equation}
  \label{eq:177}
 \Nr(\calN(\wcO))\subseteq F_K^\times
  \Nr(\whK^\times). 
\end{equation}
If $\grp$ splits in $K$, then
$\Nr(K_\grp^\times)=F_\grp^\times$, so
$\Nr(\calN(\calO_\grp))\subseteq \Nr(K_\grp^\times)$ automatically. 
  On the other hand, according to the local-global compatibility of
  class field theory \cite[\S6]{Tate-global-CFT-CasselsFrohlich}, for
  each $\grp$ non-split in $K$, 
  there is a commutative diagram 
\[\begin{tikzcd}
  F_\grp^\times/\Nr(K_\grp^\times) \ar[d]\ar[r, "\simeq"] &  \Gal(K_\grp/F_\grp)\ar[d, equal]\\
  \whF^\times/F_K^\times\Nr(\whK^\times) \ar[r, "\simeq"] &    \Gal(K/F).
\end{tikzcd}\]
Given $a_\grp\in F_\grp^\times$, we have $a_\grp\in
\Nr(K_\grp^\times)$ if and only if   $a:=(\ldots, 1, a_\grp, 1, \ldots)$ lies in $F_K^\times\Nr(\whK^\times)$. Therefore, condition (ii) is necessary  as well. 
\end{proof}

According to Corollary~\ref{cor:2case-Eop}, for $B$ to be selective for
the genus $\scrG$, it is necessary that $K\subseteq
\Sigma_\scrG$. Suppose that this is the case for
the moment. Then $F_K^\times=F_\grD^\times$ by this assumption. The same proof as Lemma~\ref{lem:K-in-Sigma} shows that 
\begin{equation}\label{eq:262}
E_\op= K \quad \text{if and
  only if}\quad    \Nr(K_\grp^\times)=
\Nr(\calE_\grp), \forall \text{ finite prime } \grp\subset O_F. 
\end{equation}
Note that $\Nr(K_\grp^\times)=
\Nr(\calE_\grp)$ holds automatically at every prime $\grp$ split in
$K$ since $\Nr(K_\grp^\times)
=F_\grp^\times$ for such $\grp$. At each remaining  $\grp$ we already have $
\Nr(K_\grp^\times)=\Nr(K_\grp^\times)\Nr(\calN(\calO_\grp))$ thanks to
the assumption $K\subseteq \Sigma_\scrG$. 
 In light of the chain of inclusions (\ref{eq:155}), 
one further asks whether the following equality holds:
\begin{equation}
  \label{eq:257}
 \Nr(K_\grp^\times)\Nr(\calN(\calO_\grp))=\Nr(\calE_\grp).  
\end{equation}
It is often convenient to treat this  purely as a
local question, discussed separately from the assumption $K\subseteq
\Sigma_\scrG$.  
\begin{ex}
  Let $\grp$ be a finite prime of $F$ that is unramified
in $K$ and coprime to the reduced discriminant $\grd(\calO)$ of $\calO$. The
latter condition implies that 
 $\calO_\grp\simeq M_2(O_{F_\grp})$, the ring of $2\times 2$ matrices
 with entries in $O_{F_\grp}$. According to
 \cite[Theorem~II.3.2]{vigneras}, all optimal embeddings of
 $B_\grp$ into $\calO_\grp$ are $\calO_\grp^\times$-conjugate, so
 $\calE_\grp=K_\grp^\times\calO_\grp^\times$. On the other hand, 
$\Nr(K_\grp^\times)\supseteq O_{F_\grp}^\times$, and 
$\calN(\calO_\grp)=F_\grp^\times\calO_\grp^\times$. Therefore, 
\begin{equation}
  \label{eq:169}
\Nr(K_\grp^\times)=
\Nr(K_\grp^\times)\Nr(\calN(\calO_\grp))=
\Nr(\calE_\grp).
\end{equation}
This already shows that (\ref{eq:257}) holds for almost all finite
primes of $F$. 
\end{ex}
A key to Voight's optimal selectivity theorem as
quoted in Theorem~\ref{thm:Voight31} is the following equality result by himself 
 \cite[Proposition~31.5.7]{voight-quat-book}. 

 \begin{prop}\label{prop:voight-eichler}
   If $\calO_\grp$ is an Eichler order, then  equality (\ref{eq:257}) holds  at
$\grp$. 
 \end{prop}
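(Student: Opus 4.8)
The plan is to reduce the (purely local) identity \eqref{eq:257} to a single assertion about the Bruhat--Tits tree of $\GL_2(F_\grp)$. Two cases are immediate. If $D_\grp$ is a division algebra, then $\calO_\grp$ is its unique maximal order, $\calN(\calO_\grp)=D_\grp^\times$ and $\Nr(D_\grp^\times)=F_\grp^\times$, so both sides of \eqref{eq:257} equal $F_\grp^\times$; if $K_\grp\cong F_\grp\times F_\grp$, then $\Nr(K_\grp^\times)=F_\grp^\times$ and again both sides equal $F_\grp^\times$ (using $\calE_\grp\neq\emptyset$, which holds by \eqref{eq:3}). So from now on $D_\grp\cong M_2(F_\grp)$, $K_\grp/F_\grp$ is a field, and $\calO_\grp$ is an Eichler order of level $\grp^e$, $e\ge0$. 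By local class field theory $[F_\grp^\times:\Nr(K_\grp^\times)]=2$, and $\Nr(K_\grp^\times)=\{x\in F_\grp^\times:v_\grp(x)\ \text{even}\}$ precisely when $K_\grp/F_\grp$ is unramified. Since $\Nr(K_\grp^\times)\subseteq\Nr(K_\grp^\times)\Nr(\calN(\calO_\grp))\subseteq\Nr(\calE_\grp)\subseteq F_\grp^\times$ by \eqref{eq:155}, all three middle groups lie between $\Nr(K_\grp^\times)$ and $F_\grp^\times$, so \eqref{eq:257} can fail only if $\Nr(\calN(\calO_\grp))\subseteq\Nr(K_\grp^\times)$ while $\Nr(\calE_\grp)=F_\grp^\times$. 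A direct computation (using for $e\ge1$ the Atkin--Lehner-type element of $\calN(\calO_\grp)\setminus F_\grp^\times\calO_\grp^\times$, whose reduced norm has valuation $e$, together with $\Nr(\calO_\grp^\times)=O_{F_\grp}^\times$) shows $\Nr(\calN(\calO_\grp))=\{x:v_\grp(x)\ \text{even}\}$ when $e$ is even and $\Nr(\calN(\calO_\grp))=F_\grp^\times$ when $e$ is odd. Hence $\Nr(\calN(\calO_\grp))\subseteq\Nr(K_\grp^\times)$ forces $K_\grp/F_\grp$ unramified and $e$ even, and the whole proposition reduces to the following: \emph{when $K_\grp/F_\grp$ is unramified and $e$ is even, $v_\grp(\Nr g)$ is even for every $g\in\calE_\grp$.}

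To prove this, identify $\varphi(K_\grp)$ with $K_\grp\subseteq M_2(F_\grp)$ and let $\calT$ be the tree of $\GL_2(F_\grp)$; its vertices carry a $\Z/2$-valued type with $\mathrm{type}(hv)\equiv\mathrm{type}(v)+v_\grp(\det h)\pmod 2$ for $h\in\GL_2(F_\grp)$ and $d(v,v')\equiv\mathrm{type}(v)+\mathrm{type}(v')\pmod2$ for vertices $v,v'$. Because $K_\grp/F_\grp$ is unramified, $K_\grp^\times$ fixes a unique vertex $v_0$ (the homothety class of $O_{K_\grp}$), and for every vertex $v$ the order of $K_\grp$ attached to the maximal order $\calO_v$ is $K_\grp\cap\calO_v=O_{F_\grp}+\grp^{\,d(v,v_0)}O_{K_\grp}$. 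Let $\grp^{c}$ be the conductor of $B_\grp$ in $K_\grp$, and let $w_0$ be the midpoint of the geodesic of $\calO_\grp$ — a vertex, since $e$ is even. Take $g\in\calE_\grp$. Then $g\calO_\grp g^{-1}=\calO_v\cap\calO_{v'}$, where $v,v'$ are the endpoints of its geodesic $g\cdot(\text{geodesic of }\calO_\grp)$, of length $e$ and midpoint $gw_0$. Intersecting the two orders, $K_\grp\cap g\calO_\grp g^{-1}=O_{F_\grp}+\grp^{\,c'}O_{K_\grp}$ with $c'=\max\{d(v,v_0),d(v',v_0)\}$; projecting $v_0$ onto the geodesic $[v,v']$ and using convexity of the distance function one finds $c'=d(v_0,gw_0)+e/2$. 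Thus $g\in\calE_\grp$, i.e.\ $c'=c$, forces $d(v_0,gw_0)=c-e/2$, independent of $g$; applying this to $g$ and to $1\in\calE_\grp$ gives $d(v_0,gw_0)\equiv d(v_0,w_0)\pmod2$. Comparing types, $d(v_0,gw_0)\equiv d(v_0,w_0)+v_\grp(\det g)\pmod2$, whence $v_\grp(\Nr g)=v_\grp(\det g)\equiv0\pmod2$. This proves the reduced statement, and therefore $\Nr(\calE_\grp)=\Nr(K_\grp^\times)=\Nr(K_\grp^\times)\Nr(\calN(\calO_\grp))$, i.e.\ \eqref{eq:257}.

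The only step carrying genuine content is the tree identity $c'=d(v_0,gw_0)+e/2$; everything after it is parity bookkeeping. Both hypotheses of the reduced statement are essential here: evenness of $e$ is what makes $w_0$ a vertex and $e/2$ an integer, and unramifiedness of $K_\grp/F_\grp$ is what pins $K_\grp^\times$ to a single fixed vertex and gives the clean formula $K_\grp\cap\calO_v=O_{F_\grp}+\grp^{d(v,v_0)}O_{K_\grp}$; in every other configuration the first paragraph already produces \eqref{eq:257} for free. I would isolate that last formula — equivalently, the description of the action of the non-split (unramified) torus on $\calT$ — as a preliminary lemma, since it is the point where the structure of optimal embeddings of orders of $K_\grp$ into $M_2(O_{F_\grp})$ really enters; it is classical but deserves a precise statement before use.
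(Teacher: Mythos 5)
Your proof is correct. Note that the paper itself gives no argument for this proposition --- it is quoted verbatim from \cite[Proposition~31.5.7]{voight-quat-book} --- and your Bruhat--Tits tree argument (reduce to the case $K_\grp/F_\grp$ unramified and $e$ even via the index-$2$ sandwich \eqref{eq:155}, then show $\nu_\grp(\Nr g)$ is even for $g\in\calE_\grp$ by comparing the distance from the fixed vertex of $K_\grp^\times$ to the midpoint of the geodesic of $g\calO_\grp g^{-1}$ with that for $\calO_\grp$, using $K_\grp\cap\calO_v=O_{F_\grp}+\grp^{d(v,v_0)}O_{K_\grp}$) is essentially the standard proof found in the cited sources (Voight's \S30.6/\S31.5 and Arenas et al.), so it is a faithful self-contained reconstruction rather than a new route.
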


To go beyond the Eichler orders, we  recall the notion of  Eichler invariants from
\cite[Definition~1.8]{Brzezinski-1983}.

\begin{defn}\label{defn:eichler-invariant}
  Let $\grp$ be a finite prime of $F$,  $\grk_\grp:= O_F/\grp$
be the finite residue field of $\grp$,  and $\grk_\grp'/\grk_\grp$ be the unique
quadratic field extension.  When $\calO_\grp\not\simeq
M_2(O_{F_\grp})$, the quotient of $\calO_\grp$ by its Jacobson radical
$\grJ(\calO_\grp)$ falls into the following three cases: 
\[\calO_\grp/\grJ(\calO_\grp)\simeq \grk_\grp\times \grk_\grp, \qquad \grk_\grp,
\quad\text{or}\quad \grk_\grp', \]
and the \emph{Eichler invariant} $e_\grp(\calO)$ of $\calO$ at $\grp$ is defined to be
$1, 0, -1$ accordingly.  As a convention, if $\calO_\grp\simeq
M_2(O_{F_\grp})$, then $e_\grp(\calO)$ is defined to be
$2$.

Similarly, we write $(K/\grp)$ for the symbol\footnote{This is usually
called the Artin symbol, but we want to distinguish it from the Artin
symbol $(\gra, K/F)\in \Gal(K/F)$ to be used later where we identify
$\Gal(K/F)$ with $\zmod{2}$.} that 
takes value $1, 0, -1$ according to whether $\grp$ is split, ramified or
inert in $K/F$. 
\end{defn}
 For
example, if $D$ is ramified at $\grp$ and $\calO_\grp$ is maximal, then
$e_\grp(\calO)=-1$. It is shown in
\cite[Proposition~2.1]{Brzezinski-1983} that 
$e_\grp(\calO)=1$ if and only if
$\calO_\grp$ is a non-maximal Eichler order (particularly,
$D$ is split at $\grp$).  As a result, if $\calO$ is an Eichler
order, then $e_\grp(\calO)\neq 0$ for every finite prime $\grp$.




We extend Voight's equality result above to local quaternion orders with Eichler invariant
$-1$. 

\begin{lem}\label{lem:eichler-inv-minus1}
   If $e_\grp(\calO)=-1$, then $\Nr(K_\grp^\times)\Nr(\calN(\calO_\grp))= \Nr(\calE_\grp)$.
\end{lem}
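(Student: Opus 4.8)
The plan is to reduce everything to an explicit description of the local order $\calO_\grp$ when $e_\grp(\calO) = -1$, and then to compute $\Nr(\calE_\grp)$ directly. First I would recall the classification underlying Definition~\ref{defn:eichler-invariant}: when $e_\grp(\calO) = -1$, we have $\calO_\grp/\grJ(\calO_\grp) \simeq \grk_\grp'$, the quadratic field extension of the residue field, and by the structure theory of Brzezinski \cite{Brzezinski-1983} such an order sits inside a quaternion algebra $D_\grp$ that is \emph{either} split \emph{or} the division algebra over $F_\grp$; in both cases $\calO_\grp$ is, up to conjugacy, the preimage in a maximal order of the valuation ring of the unramified quadratic extension $L_\grp := \grk_\grp'$-lift, i.e.\ $\calO_\grp = O_{L_\grp} + \grJ$ for a suitable two-sided ideal $\grJ$. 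Concretely $\calO_\grp$ contains the unramified quadratic $O_{F_\grp}$-order $O_{L_\grp}$, its normalizer $\calN(\calO_\grp)$ is generated over $F_\grp^\times \calO_\grp^\times$ by a single uniformizer-type element $\pi$ with $\pi^2 \in \grp \cdot O_{F_\grp}^\times$ (the element implementing the nontrivial automorphism of $L_\grp/F_\grp$ by conjugation), and $\Nr(\pi) \in \grp^{\mathrm{odd}} O_{F_\grp}^\times$. So $\Nr(\calN(\calO_\grp))/\Nr(\calO_\grp^\times)$ is generated by the class of $\Nr(\pi)$, which has odd valuation.

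Next I would pin down $\Nr(\calO_\grp^\times)$ and $\Nr(K_\grp^\times)$. Since $\calO_\grp \supseteq O_{L_\grp}$ and $L_\grp/F_\grp$ is unramified, $\Nr$ restricted to $O_{L_\grp}^\times$ already surjects onto $O_{F_\grp}^\times$; hence $\Nr(\calO_\grp^\times) = O_{F_\grp}^\times$ and $\Nr(\calN(\calO_\grp)) = O_{F_\grp}^\times \cdot \langle \varpi^{2k+1}\rangle$ for the local uniformizer $\varpi$ and some odd integer $2k+1 \geq 1$ (in the maximal/division case $k=0$). On the $K$ side: if $\grp$ is split in $K$ then $\Nr(K_\grp^\times) = F_\grp^\times$ and the identity is trivial, so assume $\grp$ is inert or ramified. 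If $\grp$ is \emph{ramified} in $K$, then $\Nr(K_\grp^\times)$ has index $2$ in $F_\grp^\times$ but still contains a uniformizer, so it contains elements of every valuation; in particular it contains $\Nr(\calN(\calO_\grp))$ and a fortiori $\Nr(K_\grp^\times)\Nr(\calN(\calO_\grp)) = \Nr(K_\grp^\times)$, and I would then just need $\Nr(\calE_\grp) \subseteq \Nr(K_\grp^\times)$, which follows from the inclusion chain (\ref{eq:155}) once we know the top group is all of $F_\grp^\times$ only when $\grp$ is inert — so the only genuinely delicate case is $\grp$ \emph{inert} in $K$.

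So the heart of the proof is the inert case. Here $\Nr(K_\grp^\times) = O_{F_\grp}^\times \cdot \langle \varpi^2 \rangle$ consists exactly of the elements of even valuation, and $\Nr(K_\grp^\times)\Nr(\calN(\calO_\grp)) = O_{F_\grp}^\times\langle\varpi\rangle = F_\grp^\times$ because $\Nr(\calN(\calO_\grp))$ supplies an element of odd valuation. Thus I must show $\Nr(\calE_\grp) = F_\grp^\times$, i.e.\ produce $g_\grp \in D_\grp^\times$ of odd reduced-norm valuation with $\varphi(K_\grp) \cap g_\grp \calO_\grp g_\grp^{-1} = \varphi(B_\grp)$. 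The natural candidate is $g_\grp = \pi$, the same normalizing element as above: conjugation by $\pi$ preserves $\calO_\grp$ (it is in the normalizer) and it satisfies $\varphi(K_\grp) \cap \pi\calO_\grp\pi^{-1} = \varphi(K_\grp)\cap\calO_\grp = \varphi(B_\grp)$, so $\pi \in \calE_\grp$ and $\Nr(\pi)$ has odd valuation — done. The \textbf{main obstacle} is verifying that $\calN(\calO_\grp)$ really does contain an element of \emph{odd} reduced-norm valuation in \emph{every} $e_\grp(\calO)=-1$ situation (maximal order in the division algebra, versus the various non-maximal non-Eichler orders in the split algebra with residue quotient $\grk_\grp'$), and, in the split case, that such an element can be chosen to simultaneously lie in $\calE_\grp$; for this I would lean on Brzezinski's explicit description of these orders and their normalizers in \cite{Brzezinski-1983}, checking that in each case the generator of $\calN(\calO_\grp)/(F_\grp^\times\calO_\grp^\times)$ acts on $\calO_\grp$ by the Frobenius of the embedded unramified quadratic subring and hence fixes $\varphi(B_\grp) = \varphi(K_\grp)\cap\calO_\grp$ setwise.
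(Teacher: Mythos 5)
Your proof fails at the exact point you flag as the ``main obstacle'', and the failure is not an unverified detail but a sign error in the whole strategy. You assert that when $e_\grp(\calO)=-1$ the normalizer $\calN(\calO_\grp)$ always supplies an element whose reduced norm has odd valuation, so that $\Nr(K_\grp^\times)\Nr(\calN(\calO_\grp))=F_\grp^\times$ in the inert case and the problem reduces to producing enough elements of $\calE_\grp$. This is true only when $D_\grp$ is division. When $D_\grp\simeq M_2(F_\grp)$ and $e_\grp(\calO)=-1$, Brzezinski's computation of the normalizer \cite[Theorem~2.2]{Brzezinski-crelle-1990} gives $\Nr(\calN(\calO_\grp))=F_\grp^{\times 2}O_{F_\grp}^\times$: every reduced norm from the normalizer has \emph{even} valuation (this is also forced by $\nu_\grp(\grd(\calO_\grp))$ being even in the split case, by \cite[Corollary~3.2]{Brzezinski-1983}). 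Consequently, when $\grp$ is inert in $K$ and $D_\grp$ is split, the group $\Nr(K_\grp^\times)\Nr(\calN(\calO_\grp))$ equals $F_\grp^{\times 2}O_{F_\grp}^\times$, a proper index-$2$ subgroup of $F_\grp^\times$, and the content of the lemma is an \emph{upper} bound $\Nr(\calE_\grp)\subseteq F_\grp^{\times 2}O_{F_\grp}^\times$. Your plan of exhibiting $\pi\in\calE_\grp$ with $\nu_\grp(\Nr\pi)$ odd is aimed in the wrong direction: exhibiting more elements of $\calE_\grp$ can only make the job harder, and in fact no such $\pi$ exists inside $\calN(\calO_\grp)$.

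What is actually required in the split inert case is to show $\calE_\grp$ is no larger than $K_\grp^\times\calN(\calO_\grp)$, and this is where the real work lies. The paper handles it in two steps: (i) when $B_\grp=O_{K_\grp}$, the normalizer acts transitively on $\Emb(O_{K_\grp},\calO_\grp)$ (from the start of the proof of \cite[Theorem~3.3]{Brzezinski-crelle-1990}), so $\calE_\grp=K_\grp^\times\calN(\calO_\grp)$ exactly; (ii) when $B_\grp$ is non-maximal, every optimal embedding $B_\grp\to\calO_\grp$ extends to an optimal embedding of $B_\grp'=O_{F_\grp}+\grp^{-1}\grf(B_\grp)O_{K_\grp}$ into the unique minimal overorder $\calO_\grp'$ \cite[Lemma~3.7]{Brzezinski-crelle-1990}, giving $\calE_\grp\subseteq\calE_\grp'$, and one ascends the Bass chain of \cite[Corollary~3.2]{Brzezinski-1983} until either $B$ becomes maximal or $\calO$ becomes $M_2(O_{F_\grp})$. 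Neither the transitivity nor the inductive extension appears in your sketch, and without them the split inert case is genuinely unproved. The ramified case and the division case in your write-up are fine and match the paper's treatment; also the preliminary reduction $\Nr(\calO_\grp^\times)=O_{F_\grp}^\times$ via $O_{L_\grp}\hookrightarrow\calO_\grp$ is the same as the paper's, using \cite[Proposition~1.12]{Brzezinski-crelle-1990}.
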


\begin{proof}
  Without lose of generality, we assume that $\grp$ is non-split in
  $K$.  By
  \cite[Proposition~3.1]{Brzezinski-1983}, $\calO_\grp$ is a Bass
  order (see \cite[\S37]{curtis-reiner:1},
  \cite[\S1]{Brzezinski-crelle-1990}, \cite{Voight-basic-orders}
  and \cite[\S3.1]{xue-yu-zheng:spIII}). Let $L_\grp/F_\grp$ be the
  unique unramified quadratic field extension.  It is shown in
  \cite[Proposition~1.12]{Brzezinski-crelle-1990} that
  $\Emb(O_{L_\grp}, \calO_\grp)\neq \emptyset$, which implies that
  $\Nr(\calO_\grp^\times)=O_{F_\grp}^\times$ since
  \begin{equation}
    \label{eq:263}
O_{F_\grp}^\times\supseteq \Nr(\calO_\grp^\times)\supseteq
  \Nm_{L_\grp/F_\grp}(O_{L_\grp}^\times)=O_{F_\grp}^\times.      
  \end{equation}
Thus if $K_\grp/F_\grp$ is ramified, then
$\Nr(K_\grp^\times)\Nr(\calN(\calO_\grp))=F_\grp^\times$ and the
equality is trivial again.

Let $\grd(\calO_\grp)$ be the reduced discriminant of $\calO_\grp$,
and $\nu_\grp: F_\grp^\times \twoheadrightarrow \bbZ$ be the
normalized discrete valuation of $F_\grp$.  From \cite[Corollary~3.2]{Brzezinski-1983}, $\nu_\grp(\grd(\calO_\grp))$
is odd if and only if $\grD_\grp$ is division. When
$\grD_\grp$ is division, there exists $u\in
\calN(\calO_\grp)$ such that $\nu_\grp(\Nr(u))$ is odd by
\cite[Theorem~2.2]{Brzezinski-crelle-1990}. It follows that $\Nr(\calN(\calO_\grp))=F_\grp^\times$ in this case and equality is
trivial once more.

Lastly, assume that $K_\grp=L_\grp$ and $\grD_\grp\simeq M_2(F_\grp)$. In this
case, $\Nr(\calN(\calO_\grp))=\Nr(K_\grp^\times)=F_\grp^{\times2}O_{F_\grp}^\times$ by
\cite[Theorem~2.2]{Brzezinski-crelle-1990}. If $B_\grp=O_{K_\grp}$,
then $\calN(\calO_\grp)$ acts transitively on
$\Emb(B_\grp, \calO_\grp)$ as shown in the start of the proof of
\cite[Theorem~3.3, p.~178]{Brzezinski-crelle-1990}. In other words,
$\calE_\grp=K_\grp^\times\calN(\calO_\grp)$, and the equality
(\ref{eq:257}) holds. Now suppose that $B_\grp$ is non-maximal, and
$B_\grp'\supseteq B_\grp$ is the unique $O_{F_\grp}$-order in $K_\grp$
such that  $B_\grp=O_{F_\grp}+\grp B_\grp'$. Let $\calO_\grp'\supset \calO_\grp$
be the unique minimal overorder of $\calO_\grp$.  We claim that
$\Nr(\calN(\calO_\grp'))=\Nr(K_\grp^\times)$ holds for $\calO_\grp'$
as well. Indeed, if
$\calO_\grp'\not\simeq M_2(O_{F_\grp})$, then it has Eichler invariant
$-1$ again by \cite[Corollary~3.2]{Brzezinski-1983}, so
the equality holds 
as shown above. 
If 
$\calO_\grp'\simeq M_2(O_{F_\grp})$, then the equality  holds by (\ref{eq:169}). 
The claim is verified.  Fix an optimal embedding
$\varphi_\grp: B_\grp\to \calO_\grp$, and put
$\calE_\grp':=\calE_\grp(\varphi_\grp, B_\grp', \calO_\grp')$. 
According to
\cite[Lemma~3.7]{Brzezinski-crelle-1990}, every optimal embedding
$B_\grp\to \calO_\grp$ extends to an optimal embedding
$B_\grp'\to \calO_\grp'$, which implies that $\calE_\grp \subseteq
\calE_\grp'$. Thus to show that $\Nr(K_\grp^\times)=\Nr(\calE_\grp)$,
it is enough to show that
$\Nr(K_\grp^\times)=\Nr(\calE_\grp')$ for the pair $(B_\grp', \calO_\grp')$. Iterating the above argument
using the ascending chain of orders in
\cite[Corollary~3.2]{Brzezinski-1983}, we eventually arrive at a pair
$(B_\grp'', \calO_\grp'')$ where either $B_\grp''=O_{K_\grp}$ or
$\calO_\grp''\simeq M_2(O_{F_\grp})$. It has already been shown that
$\Nr(K_\grp^\times)=\Nr(\calE_\grp'')$, and the lemma is proved.  
\end{proof}


In the next theorem, we no longer keep $\calO\in \scrG$ fixed with
$\Emb(B, \calO)\neq \emptyset$. Rather, $\calO$ is allows to be any
arbitrary member of $\scrG$. This does not affect the subgroup
$\Nr(\calE_\grp)\subseteq F_\grp^\times$, which depends only on $B$
and the genus $\scrG$. Since $e_\grp(\calO)$ is independent of the
choice of $\calO\in \scrG$ as well, it makes sense to put
$e_\grp(\scrG):=e_\grp(\calO)$ for any $\calO\in \scrG$.

\begin{thm}\label{thm:selectivity}
  Let $\scrG$ be an arbitrary  genus of orders in $D$. Let  $K/F$ be a
  quadratic field extension that is $F$-embeddable into $\grD$, and
  $B$ be an order in $K$.
 Suppose that condition (\ref{eq:3}) holds for $B$ and $\scrG$. Let $S$ be the following finite (possibly
  empty) set of primes of $F$:
  \begin{equation}
    \label{eq:178}
    S:=\left\{\grp\, \middle\vert\, (K/\grp)\neq
      1\quad \text{and}\quad e_\grp(\scrG)=0    \right\}, 
  \end{equation}    
Then $B$ is (optimally spinor) selective for $\scrG$ if and only if
    \begin{equation}
      \label{eq:163}
      K\subseteq \Sigma_\scrG,\quad \text{and} \quad  \Nm_{K/F}(K_\grp^\times)=
\Nr(\calE_\grp)\text{ for every } \grp\in S. 
    \end{equation}
    If $B$ is selective, then
    \begin{enumerate}
    \item  for any two orders   $\calO, \calO'\in \scrG$,  
   \begin{equation}
\label{eq:164}
    \Delta(B, \calO)=\rho(\calO, \calO')|_K
+\Delta(B, \calO'), 
  \end{equation}
  where $\rho(\calO, \calO')|_K$ is the restriction of $\rho(\calO,
  \calO')\in \Gal(\Sigma_\scrG/F)$ to $K$,  and the
  summation on the right is taken inside $\zmod{2}$ with the canonical
  identification $\Gal(K/F)\simeq \zmod{2}$;

  \item    exactly half of the spinor genera in $\SG(\scrG)$ are 
    selected by $B$. 
    \end{enumerate}
\end{thm}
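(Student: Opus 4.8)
The plan is to assemble the pieces already assembled in this section. By Corollary~\ref{cor:2case-Eop}, $B$ is selective for $\scrG$ precisely when the representation field $E_\op$ equals $K$ rather than $F$, so everything reduces to deciding when $E_\op=K$. First I would note that if $E_\op=K$ then the chain (\ref{eq:5}) forces $K\cap\Sigma_\scrG=K$, i.e.\ $K\subseteq\Sigma_\scrG$; this shows the global condition in (\ref{eq:163}) is necessary. Conversely, once $K\subseteq\Sigma_\scrG$ is assumed --- equivalently, by Lemma~\ref{lem:K-in-Sigma}, $F_K^\times=F_\grD^\times$ together with $\Nr(\calN(\calO_\grp))\subseteq\Nr(K_\grp^\times)$ at every finite $\grp$ --- criterion (\ref{eq:262}) says $E_\op=K$ holds if and only if $\Nr(K_\grp^\times)=\Nr(\calE_\grp)$ at every finite prime $\grp$. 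So the ``if and only if'' of the theorem comes down to showing that, under $K\subseteq\Sigma_\scrG$, this last family of local equalities holds automatically outside $S$ and amounts to the stated condition on $S$.

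The next step is the prime-by-prime analysis. If $\grp$ splits in $K$ then $\Nr(K_\grp^\times)=F_\grp^\times$, and the chain (\ref{eq:155}) squeezes $\Nr(\calE_\grp)$ between $\Nr(K_\grp^\times)$ and $F_\grp^\times$, giving equality for free. If $\grp$ is non-split in $K$ but $e_\grp(\scrG)\neq 0$ --- so $\grp\notin S$ --- I would split into the cases $e_\grp(\scrG)\in\{1,2\}$ and $e_\grp(\scrG)=-1$: in the first case $\calO_\grp$ is an Eichler order (namely $M_2(O_{F_\grp})$ when $e_\grp=2$, and a non-maximal Eichler order when $e_\grp=1$), so Proposition~\ref{prop:voight-eichler} gives (\ref{eq:257}); in the second case Lemma~\ref{lem:eichler-inv-minus1} gives (\ref{eq:257}). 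In either case, combining (\ref{eq:257}) with $\Nr(\calN(\calO_\grp))\subseteq\Nr(K_\grp^\times)$ (valid since $K\subseteq\Sigma_\scrG$) collapses the left-hand side to $\Nr(K_\grp^\times)$, whence $\Nr(K_\grp^\times)=\Nr(\calE_\grp)$. Thus the only primes where the local equality can fail are exactly those in $S$, and $E_\op=K$ if and only if $K\subseteq\Sigma_\scrG$ and $\Nr(K_\grp^\times)=\Nr(\calE_\grp)$ for all $\grp\in S$; since $\Nr(K_\grp^\times)=\Nm_{K/F}(K_\grp^\times)$, this is precisely (\ref{eq:163}).

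For the two consequences, assume $B$ is selective, so $E_\op=K$. Part (2) is then immediate from Corollary~\ref{cor:2case-Eop}(ii). For part (1), fix, using condition (\ref{eq:3}), an order $\calO_0\in\scrG$ with $\Emb(B,\calO_0)\neq\emptyset$; Lemma~\ref{lem:prim-criterion} with $E_\op=K$ says that for any $\calO''\in\scrG$ one has $\Delta(B,\calO'')=1$ exactly when $\rho(\calO_0,\calO'')|_K=0$ in $\Gal(K/F)\simeq\zmod{2}$, i.e.\ $\Delta(B,\calO'')\equiv 1+\rho(\calO_0,\calO'')|_K\pmod 2$. Applying this to $\calO$ and to $\calO'$ and adding modulo $2$, the additivity (\ref{eq:26}) together with the symmetry of $\rho$ turns $\rho(\calO_0,\calO)|_K+\rho(\calO_0,\calO')|_K$ into $\rho(\calO,\calO')|_K$, which yields (\ref{eq:164}).

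The genuinely new input is already isolated in Lemma~\ref{lem:eichler-inv-minus1}, so the proof of the theorem itself is an assembly rather than a fresh argument; the only point that demands care is the bookkeeping in the prime-by-prime step --- matching the four possible values of $e_\grp(\scrG)$ correctly to Proposition~\ref{prop:voight-eichler}, Lemma~\ref{lem:eichler-inv-minus1}, the split-prime triviality, and the residual set $S$, so that no prime slips through a crack and the condition on $S$ is neither too strong nor too weak.
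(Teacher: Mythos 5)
Your proposal is correct and follows exactly the route the paper intends: reduce via Corollary~\ref{cor:2case-Eop} to deciding $E_\op=K$, convert that via (\ref{eq:262}) to the local norm equalities $\Nr(K_\grp^\times)=\Nr(\calE_\grp)$, dispose of all primes outside $S$ by the split-prime triviality together with Proposition~\ref{prop:voight-eichler} and Lemma~\ref{lem:eichler-inv-minus1}, and deduce parts (1) and (2) from Lemma~\ref{lem:prim-criterion} and Corollary~\ref{cor:2case-Eop}(ii) respectively. The only difference is that you spell out the prime-by-prime bookkeeping and the mod-$2$ manipulation behind (\ref{eq:164}), which the paper's one-sentence proof leaves implicit.
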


\begin{proof}
 The first part of the
theorem follows from combining Corollary~\ref{cor:2case-Eop} with
(\ref{eq:262}),  Proposition~\ref{prop:voight-eichler} and
Lemma~\ref{lem:eichler-inv-minus1}. Equation~(\ref{eq:164}) is a
reformulation of Lemma~\ref{lem:prim-criterion}, and the last
statement is already contained in  Corollary~\ref{cor:2case-Eop}. 
\end{proof}

When combined with the criterion for $K\subseteq \Sigma_\scrG$ in
Lemma~\ref{lem:K-in-Sigma}, Theorem~\ref{thm:selectivity} reduces the
selectivity problem to purely local studies of quaternion orders over
complete discrete valuation rings. It also singles out the finitely
many places $\grp$ of $F$ with $e_\grp(\calO)=0$ as the possible
places of obstruction for the condition $K\subseteq \Sigma_\scrG$ to
be sufficient for selectivity. This point of view has already borne
fruit in \cite{peng-xue:select}, where Deke Peng and the first named
author applied Theorem~\ref{thm:selectivity} to obtain a selectivity
theorem for quaternion Bass orders that are well-behaved at the dyadic
primes of $F$. This provides the first known (as far as we are aware) concrete examples for
which the condition $K\subseteq \Sigma_\scrG$ is insufficient for
selectivity. Nevertheless, if we impose the condition that
$e_\grp(\calO)\neq 0$ for every finite prime $\grp$, then we get
results that look exactly like the ones in
\cite[Theorem~1.1]{M.Arenas-et.al-opt-embed-trees-JNT2018} and
\cite[Theorem~31.1.7]{voight-quat-book}, which are stated for Eichler
orders.

\begin{cor}\label{cor:non-zero-eichler-inv}
  Keep the assumption of Theorem~\ref{thm:selectivity} and assume further that
$e_\grp(\scrG)\neq 0$ for every finite prime $\grp$ of $F$. Then $B$ is
optimally spinor selective for $\scrG$ if and only if $K\subseteq
\Sigma_\scrG$. 
\end{cor}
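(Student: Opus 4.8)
The plan is to read the corollary off directly from Theorem~\ref{thm:selectivity}, the whole point being that the extra hypothesis $e_\grp(\scrG)\neq 0$ collapses the exceptional set $S$ to the empty set. First I would recall the definition of $S$ in \eqref{eq:178}: a finite prime $\grp$ belongs to $S$ precisely when $(K/\grp)\neq 1$ \emph{and} $e_\grp(\scrG)=0$. Under the standing assumption of the corollary the second condition fails at every finite prime of $F$, so $S=\emptyset$.

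With $S$ empty, the local requirement ``$\Nm_{K/F}(K_\grp^\times)=\Nr(\calE_\grp)$ for every $\grp\in S$'' appearing in \eqref{eq:163} is satisfied vacuously. Hence the equivalence furnished by Theorem~\ref{thm:selectivity} reads: $B$ is optimally spinor selective for $\scrG$ if and only if $K\subseteq\Sigma_\scrG$, which is exactly the assertion of the corollary. There is no genuine obstacle here; the statement is a formal consequence of the main theorem, the only substantive observation being that the finite set $S$ of candidate local obstructions is empty in this regime. If desired, one can combine this with Lemma~\ref{lem:K-in-Sigma} to render the condition $K\subseteq\Sigma_\scrG$ in the completely explicit form (i)--(ii) given there.

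Finally, I would point out that this regime genuinely contains the Eichler orders: as recalled after Definition~\ref{defn:eichler-invariant} (see \cite[Proposition~2.1]{Brzezinski-1983}), every Eichler order $\calO$ has $e_\grp(\calO)\neq 0$ at every finite prime $\grp$, so Corollary~\ref{cor:non-zero-eichler-inv} recovers part~(i) of Theorem~\ref{thm:Voight31}; together with the ``exactly half of the spinor genera are selected'' assertion of Theorem~\ref{thm:selectivity}, and with Remark~\ref{rem:Eichler-con} identifying spinor genera and types under the Eichler condition, it also recovers part~(ii).
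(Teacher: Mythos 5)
Your proof is correct and is precisely the intended reading of the corollary: the hypothesis $e_\grp(\scrG)\neq 0$ for all finite $\grp$ forces the exceptional set $S$ in \eqref{eq:178} to be empty, so the local condition in \eqref{eq:163} is vacuous and Theorem~\ref{thm:selectivity} collapses to the stated equivalence. The paper treats this as immediate (no separate proof is given), and your argument matches that intent exactly.
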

   Similarly, the following lemma is a generalization of 
\cite[Proposition~31.2.1]{voight-quat-book} (see also
\cite[Theorem~1.1(3)]{M.Arenas-et.al-opt-embed-trees-JNT2018} and 
  \cite[Proposition~5.11]{Linowitz-Selectivity-JNT2012}).

\begin{lem}\label{lem:eich-inv-nonzero}
Suppose that $e_\grp(\scrG)\neq 0$ for
every finite prime $\grp$ of $F$.  Let $\grd(\calO)$ be the reduced
discriminant of an order  $\calO\in \scrG$. 
Then 
  $K\subseteq \Sigma_\scrG$  if and only if both of the following conditions hold:
\begin{enumerate}
\item[(a)] the extension $K/F$ and the $F$-algebra $D$ are unramified
  at every finite prime $\grp$ of $F$  and ramify at exactly the same  (possibly empty) set
  of infinite places;
\item[(b)] if $\grp$ is a finite prime of $F$ with $\nu_\grp(\grd(\calO))\equiv
  1\pmod{2}$, then $\grp$ splits in $K$. 
\end{enumerate}
\end{lem}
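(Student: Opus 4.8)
The plan is to deduce Lemma~\ref{lem:eich-inv-nonzero} from the more general criterion for $K\subseteq \Sigma_\scrG$ recorded in Lemma~\ref{lem:K-in-Sigma}, by translating each of the two abstract conditions there into the concrete arithmetic statements (a) and (b) under the running hypothesis that $e_\grp(\scrG)\neq 0$ at every finite prime. First I would observe that condition (i) of Lemma~\ref{lem:K-in-Sigma}, namely $F_K^\times=F_D^\times$, is exactly the ``same set of ramified real places'' half of (a); since $K$ embeds into $D$ we always have $F_K^\times\subseteq F_D^\times$, and equality is equivalent by weak approximation to $K$ and $D$ being ramified at precisely the same (possibly empty) set of archimedean places. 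So the content to be unpacked is that condition (ii) of Lemma~\ref{lem:K-in-Sigma}, i.e.\ $\Nr(\calN(\calO_\grp))\subseteq \Nr(K_\grp^\times)$ for every finite $\grp$, is equivalent to the finite-place part of (a) together with (b).

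The heart of the argument is therefore a local, prime-by-prime analysis. Fix a finite prime $\grp$; recall that $e_\grp(\scrG)\neq 0$ means either $\calO_\grp\simeq M_2(O_{F_\grp})$ (invariant $2$), or $\calO_\grp$ is a non-maximal Eichler order (invariant $1$), or $e_\grp(\scrG)=-1$. In all these cases I would extract the value of $\Nr(\calN(\calO_\grp))$ from the references already cited in the proof of Lemma~\ref{lem:eichler-inv-minus1}: when $\grD_\grp$ is division one has $\Nr(\calN(\calO_\grp))=F_\grp^\times$ (there is a normalizing element of odd valuation, by \cite[Theorem~2.2]{Brzezinski-crelle-1990}), which by \cite[Corollary~3.2]{Brzezinski-1983} happens exactly when $\nu_\grp(\grd(\calO_\grp))$ is odd; and when $\grD_\grp$ is split, $\Nr(\calO_\grp^\times)=O_{F_\grp}^\times$ (trivially for $M_2$, and by the embedding $O_{L_\grp}\hookrightarrow\calO_\grp$ as in \eqref{eq:263} when $e_\grp=-1$, and by the standard Eichler-order computation when $e_\grp=1$), so $\Nr(\calN(\calO_\grp))=F_\grp^{\times2}O_{F_\grp}^\times$. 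On the $K$ side, $\Nr(K_\grp^\times)=\Nm_{K_\grp/F_\grp}(K_\grp^\times)$ equals $F_\grp^\times$ if $\grp$ splits in $K$, equals a subgroup of index $2$ containing $O_{F_\grp}^\times$ if $\grp$ is inert (unramified, $(K/\grp)=-1$), and equals an index-$2$ subgroup \emph{not} containing $O_{F_\grp}^\times$ if $\grp$ ramifies in $K$.

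Now I would run the comparison. If $\grp$ splits in $K$ there is nothing to check since $\Nr(K_\grp^\times)=F_\grp^\times$. Suppose $\grp$ is non-split in $K$. If $\grD_\grp$ is division (equivalently $\nu_\grp(\grd(\calO))$ odd), then $\Nr(\calN(\calO_\grp))=F_\grp^\times\not\subseteq\Nr(K_\grp^\times)$, so condition (ii) forces $\grp$ to split in $K$ — contradiction; hence condition (ii) is incompatible with $\grD_\grp$ division at a non-split prime, which is precisely the assertion that (a) (``$D$ unramified at finite primes'') and (b) (``$\nu_\grp(\grd(\calO))$ odd $\Rightarrow$ $\grp$ splits'') must hold. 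Conversely, if $\grD_\grp$ is split then $\Nr(\calN(\calO_\grp))=F_\grp^{\times2}O_{F_\grp}^\times\subseteq O_{F_\grp}^\times F_\grp^{\times 2}$, which is contained in $\Nr(K_\grp^\times)$ when $\grp$ is inert (that group contains $O_{F_\grp}^\times$), but is \emph{not} contained in $\Nr(K_\grp^\times)$ when $\grp$ ramifies in $K$ (a uniformizer's square $\pi^2$ has odd... rather: $O_{F_\grp}^\times\not\subseteq\Nm$ for ramified $K_\grp/F_\grp$, so already $\Nr(\calO_\grp^\times)=O_{F_\grp}^\times\not\subseteq\Nr(K_\grp^\times)$). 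Thus at a ramified-in-$K$ prime, condition (ii) fails whenever $\grD_\grp$ is split — but we also just saw it fails when $\grD_\grp$ is division — so condition (ii) rules out $K/F$ being ramified at any finite prime, which is the remaining part of (a). Assembling: (ii) for all finite $\grp$ $\iff$ $K/F$ unramified at all finite $\grp$, $D$ unramified at all finite $\grp$, and ($\nu_\grp(\grd(\calO))$ odd $\Rightarrow$ $\grp$ splits in $K$), i.e.\ the finite-place parts of (a) plus (b). Combined with the archimedean reformulation of (i) this gives the lemma.

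I expect the main obstacle to be purely bookkeeping: correctly pinning down $\Nr(\calN(\calO_\grp))$ and $\Nm_{K_\grp/F_\grp}(K_\grp^\times)$ in each of the (unramified/ramified $K_\grp$) $\times$ (split/division $\grD_\grp$) $\times$ ($e_\grp\in\{-1,1,2\}$) cases, making sure the dyadic primes are not a special case here (they are not, since we only use $\Nr(\calO_\grp^\times)=O_{F_\grp}^\times$ and the normalizer computations from \cite{Brzezinski-crelle-1990, Brzezinski-1983}, which hold uniformly), and checking that the equivalence $e_\grp(\scrG)\neq 0$ with $\grD_\grp$ division $\iff$ $\nu_\grp(\grd(\calO))$ odd is exactly \cite[Corollary~3.2]{Brzezinski-1983}. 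Once the local dictionary is laid out, the logical equivalence falls out immediately from Lemma~\ref{lem:K-in-Sigma}.
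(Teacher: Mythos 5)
Your overall plan is the paper's plan: reduce to Lemma~\ref{lem:K-in-Sigma}, translate condition (i) into the archimedean half of (a), and translate condition (ii) prime by prime via a computation of $\Nr(\calN(\calO_\grp))$. But there is a genuine error in the local dictionary that breaks the argument.

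You assert that under $e_\grp(\scrG)\neq 0$, ``$\grD_\grp$ is division $\iff$ $\nu_\grp(\grd(\calO))$ is odd,'' and consequently that $\Nr(\calN(\calO_\grp))=F_\grp^{\times 2}O_{F_\grp}^\times$ whenever $\grD_\grp$ is split. That equivalence is taken out of context: Brzezinski's \cite[Corollary~3.2]{Brzezinski-1983} gives it only under the hypothesis $e_\grp(\calO)=-1$. When $e_\grp(\calO)=1$, i.e.\ $\calO_\grp$ is a non-maximal Eichler order, $\grD_\grp$ is split yet $\nu_\grp(\grd(\calO))$ can be odd, and then the normalizer contains an Atkin--Lehner element of odd reduced-norm valuation, so $\Nr(\calN(\calO_\grp))=F_\grp^\times$, not $F_\grp^{\times2}O_{F_\grp}^\times$. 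Concretely: take $\calO_\grp$ Eichler of level $\grp^3$ in $M_2(F_\grp)$ and $\grp$ inert in $K$. Your Case~3 computation says (ii) holds there, but in fact $\Nr(\calN(\calO_\grp))=F_\grp^\times\not\subseteq\Nr(K_\grp^\times)$, so (ii) fails — this is exactly the content of (b) at such a prime, and your analysis never produces it. You also slide from $\Nr(\calO_\grp^\times)=O_{F_\grp}^\times$ to $\Nr(\calN(\calO_\grp))=F_\grp^{\times2}O_{F_\grp}^\times$, which is a non sequitur: the unit group controls only the ``even part'' of the normalizer.

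The correct dichotomy, as in the paper, is on the parity of $\nu_\grp(\grd(\calO))$, using \cite[Theorem~2.2]{Brzezinski-crelle-1990} to get $\Nr(\calN(\calO_\grp))=F_\grp^{\times2}O_{F_\grp}^\times$ when $\nu_\grp(\grd(\calO))$ is even and $F_\grp^\times$ when it is odd (this holds uniformly for all $e_\grp\in\{-1,1,2\}$). This immediately yields: (ii) $\iff$ $K$ is unramified at every $\grp$ with even $\nu_\grp(\grd(\calO))$ and splits at every $\grp$ with odd $\nu_\grp(\grd(\calO))$. The split/division dichotomy then enters only in the necessity of ``$D$ unramified at all finite primes'': if $\grD_\grp$ were division, $e_\grp(\calO)=-1$ (since $e_\grp\in\{1,2\}$ forces split), so by \cite[Corollary~3.2]{Brzezinski-1983} $\nu_\grp(\grd(\calO))$ is odd, hence $\grp$ splits in $K$ — contradicting the embeddability of $K$ into $D$. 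So the direction of inference is not ``$\grD_\grp$ division $\Rightarrow$ $\nu_\grp$ odd $\Rightarrow$ both (a) and (b)''; rather, parity gives (b) and the finite part of $K$-unramification directly, while the division case is ruled out afterwards.
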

\begin{proof}
  First note that     $\Nr(\calO_\grp^\times)=O_{F_\grp}^\times$ for
  every finite prime $\grp$ of $F$.  
Indeed, if
  $e_\grp(\calO)=-1$, then this  has been proved in (\ref{eq:263});  if $e_\grp(\calO)\in \{1, 2\}$, then $\calO_\grp$ is an
Eichler order, so this is obvious. 
  Next it follows from
  \cite[Theorem~2.2]{Brzezinski-crelle-1990} that
  \begin{equation}
    \label{eq:266}
    \Nr(\calN(\calO_\grp))=\begin{cases}
      F_\grp^{\times2}O_{F_\grp}^\times &\text{if }
      \nu_\grp(\grd(\calO))\equiv 0\pmod{2}, \\
      F_\grp^\times &\text{if } \nu_\grp(\grd(\calO))\equiv
      1\pmod{2}. 
      \end{cases}
    \end{equation}
From Lemma~\ref{lem:K-in-Sigma}, we find that $K\subseteq
\Sigma_\scrG$ if and only if
\begin{itemize}
\item $K$ and $\grD$ are ramified at
  exactly the same (possibly empty) set of real places of $F$;
\item $K$ is unramified at every prime $\grp\subset O_F$ with
  $\nu_\grp(\grd(\calO))\equiv 0\pmod{2}$;
\item   $K$ splits at every prime $\grp\subset O_F$ with $\nu_\grp(\grd(\calO))\equiv
      1\pmod{2}$. 
\end{itemize}
Thus conditions (a) and (b) are clearly sufficient.  For the
necessity, recall that if $e_\grp(\calO)\in \{1, 2\}$, then
 $D_\grp\simeq M_2(F_\grp)$.  Thus non-zeroness assumption on the
 Eichler invariants implies that
$e_\grp(\calO)=-1$ at every finite prime $\grp$ ramified in $D$.  On
the other hand, if $D_\grp$ is
  division and $e_\grp(\calO)=-1$, then   $\nu_\grp(\grd(\calO))$ is
  odd by \cite[Corollary~3.2]{Brzezinski-1983}.    Thus if $K\subseteq
\Sigma_\scrG$, then $K$ splits at every $\grp$ ramified in $D$. This
forces $D$ to be unramified at all finite places of $F$ since $K$ embeds into $D$ by our
assumption. The conditions are necessary as well.
\end{proof}





If $e_\grp(\scrG)\neq 0$ for
every finite prime $\grp$ of $F$ and $K\subseteq \Sigma_\scrG$, then for any two orders $\calO,
\calO'\in \scrG$, the element $\rho(\calO, \calO')|_K\in \Gal(K/F)$ can be
computed using an ideal  $I$ linking $\calO$ and $\calO'$ as in
Example~\ref{ex:link-ideal} since we do have
$\Nr(\calO_\grp^\times)=O_{F_\grp}^\times$ for every finite prime
$\grp$.

\section{Extending Maclachlan's relative conductor formula}
\label{sec:maclachlan}
Let $(B,\scrG)$ be as in the previous section, particularly the order $B\subset K$ satisfies condition \eqref{eq:3}. Throughout this section, we let $\scrG=\scrG_\grn$ be the genus of
Eichler orders in $D$ of level $\grn\subseteq O_F$.  For simplicity,
put
$\Sigma_\grn:=\Sigma_{\scrG_\grn}$, and assume that
$K\subseteq \Sigma_\grn$ so that every order in $K$ satisfying condition 
(\ref{eq:3}) is selective for $\scrG_\grn$. 
Let $\grf(B)$ be
the conductor of $B$, i.e.~$\grf(B)$ is the unique ideal of $O_F$ such
that $B=O_F+\grf(B)O_K$. If $B'\subseteq O_K$ is another $O_F$-order
in $K$, we put $\grf(B'/B):=\grf(B')^{-1}\grf(B)$ and call it the
\emph{relative conductor} of $B$ with respect to $B'$. This is a
fractional ideal of $O_F$. Assume that $B'$ satisfies condition
(\ref{eq:3}) as well.    If $\grn$ is square-free, Maclachlan shows in
\cite[Theorem~3.3]{Maclachlan-selectivity-JNT2008} that for any two
orders $\calO, \calO'\in \scrG_\grn$, the selectivity symbol formula
(\ref{eq:164}) may be enhanced to
   \begin{equation}
    \label{eq:120}
    \Delta(B, \calO)=\big(\grf(B'/B),
    K/F\big)+\rho(\calO, \calO')|_K+\Delta(B', \calO').  
  \end{equation}
Here  $\big(\grf(B'/B),
    K/F\big)\in \Gal(K/F)$ is the Artin symbol \cite[\S X.1]{Lang-ANT}, which is well-defined
    since the assumption $K\subseteq \Sigma_\grn$ forces $K/F$ to be unramified at all the finite places of $F$
    by Lemma~\ref{lem:eich-inv-nonzero}.  The
  summation on the right hand side is taken inside $\zmod{2}$ with the canonical
  identification $\Gal(K/F)=\zmod{2}$ as before.   In this section, we
  show that Maclachlan's formula holds without the square-free
  assumption on $\grn$.

  First, let us workout condition (\ref{eq:3}) more concretely in
  terms of $\grn$ and $\grf(B)$. Since $K\subseteq \Sigma_\grn$ by our
  assumption, $D$ and $K$ are unramified at all the finite places of $F$ according to
  Lemma~\ref{lem:eich-inv-nonzero}. Particularly, the reduced discriminant $\grd(\calO)$ of $\calO$ coincides with the level $\grn$.  For each finite prime $\grp$ of $F$, let
  $\nu_\grp: F_\grp^\times\twoheadrightarrow \bbZ$ be the normalized
  discrete valuation. Define  numerical invariants of $\calO$ and
  $B$ at $\grp$ as follows: 
  \begin{equation}
    \label{eq:15}
n_\grp:=\nu_\grp(\grd(\calO))=\nu_\grp(\grn), \qquad i_\grp(B):=\nu_\grp(\grf(B)). 
  \end{equation}
 We
  quote the relevant part of the criterion for the existence of local
  optimal embeddings given by Guo and Qin in
  \cite[Lemma~2.2]{Guo-Qin-embedding-Eichler-JNT2004}, which in turn is 
  based on a theorem of Brzezinski
  \cite[Theorem~1.8]{Brzezinski-embed-num-1991}. 
\begin{lem}\label{lem:Guo-Qin}
Keep the assumption that $\grp$ is unramified in $K$.  Then $\Emb(B_\grp, \calO_\grp)\neq \emptyset$ if and only if one of the
following conditions holds
\begin{enumerate}
\item $(K/\grp)=1$;
\item   $(K/\grp)=-1$ and $n_\grp\leq 2i_\grp(B)$. 
\end{enumerate}
In particular, if $\Emb(B_\grp, \calO_\grp)\neq \emptyset$, then
$\Emb(B_\grp', \calO_\grp)\neq\emptyset$ for any order
$B_\grp'\subseteq B_\grp$. 
\end{lem}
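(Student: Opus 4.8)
The plan is to reduce the global statement to a place-by-place analysis, since both the genus $\scrG_\grn$ and the order $B$ are determined by their completions, and $\Emb(B,\calO)\neq\emptyset$ is detected locally together with the global selectivity constraint $K\subseteq\Sigma_\grn$ (which we have assumed). Because $K\subseteq\Sigma_\grn$ forces $D$ and $K/F$ to be unramified at all finite primes (Lemma~\ref{lem:eich-inv-nonzero}), every $\calO_\grp$ is split, i.e.~$\calO_\grp$ is an Eichler order of level $\grp^{n_\grp}$ inside $M_2(F_\grp)$, and $\grp$ is either split or inert in $K$ (never ramified). So the content of Lemma~\ref{lem:Guo-Qin} is really a statement about optimal embeddings of an $O_{F_\grp}$-order of conductor $\grp^{i_\grp(B)}$ in an unramified quadratic étale algebra $K_\grp$ into a local Eichler order of level $\grp^{n_\grp}$.

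First I would dispose of the split case $(K/\grp)=1$: then $K_\grp\simeq F_\grp\times F_\grp$, and one checks directly that for any Eichler order $\calO_\grp$ and any order $B_\grp$ in $K_\grp$, an optimal embedding exists. Concretely, realize $\calO_\grp$ in its standard form as the set of matrices in $M_2(O_{F_\grp})$ that are upper triangular modulo $\grp^{n_\grp}$; the diagonal torus splits as $F_\grp\times F_\grp$ and intersected with $\calO_\grp$ gives exactly the diagonal embedding of the appropriate suborder, and one can conjugate/scale to hit any prescribed $B_\grp$. This is the assertion that appears as case (1), and it is the easy case.

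For the inert case $(K/\grp)=-1$, so $K_\grp=L_\grp$ is the unramified quadratic field extension of $F_\grp$, I would invoke Brzezinski's embedding numbers for Eichler orders as packaged by Guo–Qin (\cite[Lemma~2.2]{Guo-Qin-embedding-Eichler-JNT2004}, itself resting on \cite[Theorem~1.8]{Brzezinski-embed-num-1991}): the embedding number $m(B_\grp,\calO_\grp,\calO_\grp^\times)$ is nonzero precisely when $n_\grp\le 2\,i_\grp(B)$. The mechanism behind this bound is that an optimal embedding of $O_{L_\grp}$ into $M_2(O_{F_\grp})$ is (up to conjugacy) the regular representation, whose image meets the standard Eichler order of level $\grp^{n_\grp}$ optimally only when $n_\grp\le 0$, and passing to the suborder $B_\grp$ of conductor $\grp^{i_\grp(B)}$ relaxes this to $n_\grp\le 2\,i_\grp(B)$ because shrinking the embedded order by conductor $\grp^{i}$ allows the ambient order to be thickened along the ramified direction by $\grp^{2i}$; I would either cite this verbatim or reproduce the short normal-form computation in $M_2(F_\grp)$. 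Combining the two cases gives the stated equivalence.

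The final ``in particular'' clause is then immediate: if $\Emb(B_\grp,\calO_\grp)\neq\emptyset$ and $B_\grp'\subseteq B_\grp$, then $\grf(B_\grp')\subseteq\grf(B_\grp)$, so $i_\grp(B')\ge i_\grp(B)$; in case (1) the conclusion holds unconditionally, and in case (2) the inequality $n_\grp\le 2\,i_\grp(B)\le 2\,i_\grp(B')$ persists, so the criterion still applies to $B_\grp'$. The only real obstacle is the inert case, and there the work is entirely outsourced to Brzezinski's local embedding-number theorem; the rest is bookkeeping with conductors and valuations.
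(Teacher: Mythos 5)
Your proposal matches the paper's treatment: the paper gives no proof of this lemma at all, simply quoting it from Guo--Qin (which in turn rests on Brzezinski's embedding-number theorem), and your argument outsources the substantive inert-case criterion $n_\grp\le 2i_\grp(B)$ to exactly those same sources while correctly filling in the easy split case and the conductor-monotonicity underlying the ``in particular'' clause. One small imprecision worth noting: in the split case the diagonal torus meets the standard Eichler order in the full maximal order $O_{F_\grp}\times O_{F_\grp}$ regardless of the level, so the conjugation you mention (producing an off-diagonal entry of valuation $n_\grp-i_\grp(B)$) is not an optional normalization but the entire construction -- since your sketch does include it, this is not a gap.
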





\begin{thm}\label{thm:maclachlan}
  Let $\grn$  be an arbitrary ideal of $O_F$. Then formula (\ref{eq:120}) holds for any  orders $\calO, \calO'\in \scrG_\grn$
  and for any orders $B, B'$ in $K$ both satisfying condition (\ref{eq:3}).  
\end{thm}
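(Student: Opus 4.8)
The strategy is to reduce formula (\ref{eq:120}) to formula (\ref{eq:164}) by controlling how the optimal representation field $E_\op$ changes when we replace the pair $(B,\calO)$ by $(B',\calO')$, and then to track that change place-by-place via the adelic chain of inclusions in (\ref{eq:173}). Since both $B$ and $B'$ satisfy (\ref{eq:3}) and $K\subseteq\Sigma_\grn$, both are selective for $\scrG_\grn$, so by Corollary~\ref{cor:2case-Eop} we have $E_\op(B)=E_\op(B')=K$. Pick orders $\calO,\calO'\in\scrG_\grn$ together with optimal embeddings $\varphi\in\Emb(B,\calO)$ and $\varphi'\in\Emb(B',\calO')$; the latter exists by Lemma~\ref{lem:Guo-Qin}. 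Applying Lemma~\ref{lem:prim-criterion} twice — once to the pair $(B,\calO)$ and a third auxiliary order, once to $(B',\calO')$ — and using the additivity relation (\ref{eq:26}), we get $\Delta(B,\calO)-\Delta(B',\calO') = \rho(\calO,\calO')|_K + c$, where $c\in\Gal(K/F)\simeq\zmod 2$ is an error term depending only on the pair $(B,B')$, namely on how the two fixed embeddings $\varphi,\varphi'$ and the two orders relate at each finite prime. The whole content of the theorem is the identification $c = (\grf(B'/B), K/F)$.

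**Localizing the correction term.** To pin down $c$, I would compare $\whE(\varphi,B,\calO)$ and $\whE(\varphi',B',\calO')$ prime by prime, or equivalently compare $\Nr(\calE_\grp(B))$ and $\Nr(\calE_\grp(B'))$ — but both equal $\Nr(K_\grp^\times)$ for every $\grp$ since selectivity holds, so the subgroups coincide and $c$ is instead measured by the \emph{element} of $\whF^\times/F_K^\times\Nr(\whK^\times)$ represented by a change-of-base idele. Concretely, fix a single $F$-embedding $\psi\colon K\hookrightarrow D$ and let $\calO(B):=\psi(B)$-compatible and $\calO(B')$ be chosen so that $\psi$ is simultaneously optimal for $B$ into some $\calO_B\in\scrG_\grn$ and for $B'$ into some $\calO_{B'}\in\scrG_\grn$; this is possible at each $\grp$ by Lemma~\ref{lem:Guo-Qin} (since $B'\subseteq B$ may be assumed after passing to $B''=B\cap B'$ if necessary, or by symmetry treating the two inclusions separately). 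Then $\rho(\calO_B,\calO_{B'})$ is computed by an ideal $I$ linking the two Eichler orders as in Example~\ref{ex:link-ideal}, with $\rho(\calO_B,\calO_{B'})=(\Nr(I),\Sigma_\grn/F)$, and the local computation shows $\nu_\grp(\Nr(I)) \equiv i_\grp(B)-i_\grp(B') \pmod 2$ at each finite prime where the conductors differ — which is exactly $\nu_\grp(\grf(B'/B))$. Restricting the Artin symbol to $K$ and using $\Gal(K/F)\simeq\zmod 2$ gives $\rho(\calO_B,\calO_{B'})|_K = (\grf(B'/B),K/F)$, and combining with (\ref{eq:164}) applied to the pairs $(B,\calO)$, $(B,\calO_B)$ and then $(B',\calO_{B'})$, $(B',\calO')$ yields (\ref{eq:120}).

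**The main obstacle.** The genuinely delicate point is the local embedding/conductor bookkeeping at primes $\grp\mid\grn$ with $n_\grp$ large and where $(K/\grp)=-1$: one must verify that a single local embedding $\psi_\grp$ can be taken optimal for \emph{both} $B_\grp$ (into an Eichler order of level $\grp^{n_\grp}$) and $B'_\grp$ (into another such order), and that the linking ideal between these two Eichler orders has reduced norm of the predicted valuation $i_\grp(B)-i_\grp(B')$ modulo $2$. This requires an explicit description of optimal embeddings of orders of conductor $\grp^{i}$ into Eichler orders of level $\grp^{n}$ — essentially the Brzezinski–Guo–Qin picture underlying Lemma~\ref{lem:Guo-Qin} — together with the fact that two such Eichler orders of the same level containing a common embedded $B'_\grp$ differ by a principal ideal whose norm valuation is governed by the conductor index. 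Once $\grn$ is allowed to be non-square-free the naive "tree of Eichler orders" argument from the square-free case no longer applies verbatim, so the argument must be recast in terms of the norm of the linking ideal rather than combinatorics on the Bruhat–Tits tree; this is the step I would spend the most care on, and where the extension beyond Maclachlan's original setting actually happens. The archimedean places contribute nothing new: $F_K^\times = F_D^\times$ holds by $K\subseteq\Sigma_\grn$ and Lemma~\ref{lem:eich-inv-nonzero}, so the correction term is purely finite-adelic.
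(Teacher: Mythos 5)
Your strategic outline matches the paper's: use the additivity of $\rho$ from (\ref{eq:26}) and the multiplicativity of relative conductors to reduce to one explicit comparison, fix a single $F$-embedding, build two orders in $\scrG_\grn$ admitting optimal embeddings from the two orders of $K$, and identify the correction term with the Artin symbol of the linking ideal's reduced norm. You also correctly locate the crux at the inert primes $\grp\mid\grn$ with $n_\grp$ even.

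However, you never actually execute that crux; you merely assert that ``the local computation shows $\nu_\grp(\Nr(I))\equiv i_\grp(B)-i_\grp(B')\pmod 2$.'' That assertion \emph{is} the theorem at such primes, and it is not an immediate consequence of Lemma~\ref{lem:Guo-Qin}, which only gives existence of a local optimal embedding, not control over the norm of the conjugating element. The paper fills this exactly by working in explicit coordinates: at $\grp\in T_{-1}$ it identifies $D_\grp=M_2(F_\grp)$ so that $\omega_\grp\mapsto\begin{bmatrix}0&-u_\grp\\1&t_\grp\end{bmatrix}$, takes $(\calO_0)_\grp$ to be the standard Eichler order $\begin{bmatrix}O_{F_\grp}&O_{F_\grp}\\\pi_\grp^{n_\grp}O_{F_\grp}&O_{F_\grp}\end{bmatrix}$, and then conjugates by $x_\grp=\begin{bmatrix}0&1\\\pi_\grp^{i_\grp'}&0\end{bmatrix}$, checking directly that $\varphi(K_\grp)\cap x_\grp(\calO_0)_\grp x_\grp^{-1}=\varphi(B_\grp')$ (the inequality $n_\grp-i_\grp'\le i_\grp'$ from Lemma~\ref{lem:Guo-Qin}(2) is what makes this work), and that $\nu_\grp(\Nr(x_\grp))=i_\grp'$, which then cancels the conductor contribution $n_\grp-i_\grp'$ modulo $2$ because $n_\grp$ is even (Lemma~\ref{lem:eich-inv-nonzero}). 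Without some such concrete model, your parity claim is unsupported and the proof does not close. One further streamlining the paper uses and you miss: rather than comparing $B$ and $B'$ (or passing to $B\cap B'$), it fixes once and for all a \emph{canonical} base order $B_0$ with $i_\grp(B_0)=0$ or $n_\grp$ according to the parity of $n_\grp$, and reduces to the single comparison with $B_0$; this keeps the sets $T_{\pm1}$ small and makes the Artin-symbol bookkeeping transparent, in particular the fact that $T_1$ contributes nothing to either side.
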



In particular, we can take $\calO'=\calO$ and pick a suitable $B'$ to
compute $\Delta(B, \calO)$.
\begin{cor}
Keep $\grn, B, \calO$ be as in Theorem~\ref{thm:maclachlan}.  Let $B':=\varphi^{-1}(\calO)$ for an
   $F$-embedding $\varphi: K\to D$.  Then
  \[\Delta(B, \calO)=\big(\grf(B'/B),
    K/F\big)+1.\]
\end{cor}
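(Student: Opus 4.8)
The plan is to deduce the corollary directly from Theorem~\ref{thm:maclachlan} by specializing the two ``free'' orders in formula~(\ref{eq:120}) to the situation at hand. Explicitly, I would apply~(\ref{eq:120}) with the second Eichler order $\calO'$ taken to be $\calO$ itself and with the second quadratic order $B'$ taken to be $\varphi^{-1}(\calO)$. Everything then hinges on checking that this choice of $B'$ is legitimate, i.e.\ that $B'$ is an $O_F$-order of full rank in $K$ satisfying condition~(\ref{eq:3}), and on evaluating the two terms on the right-hand side of~(\ref{eq:120}) that become trivial.

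First I would note that since $\varphi\colon K\hookrightarrow D$ is an $F$-algebra embedding, $\varphi(K)$ is a two-dimensional $F$-subspace of $D$ and $\varphi(K)\cap\calO$ is an $O_F$-lattice of full rank in $\varphi(K)$; transporting along the $F$-linear isomorphism $\varphi\colon K\xrightarrow{\sim}\varphi(K)$ shows that $B':=\varphi^{-1}(\calO)=\varphi^{-1}\bigl(\varphi(K)\cap\calO\bigr)$ is an $O_F$-order of full rank in $K$. In particular $B'\subseteq O_K$, so the relative conductor $\grf(B'/B)=\grf(B')^{-1}\grf(B)$ is defined. By construction $\varphi(K)\cap\calO=\varphi(B')$, that is, $\varphi\in\Emb(B',\calO)$, and hence $\Emb(B',\calO)\neq\emptyset$. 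By the implication noted just before~(\ref{eq:3}), the nonemptiness of $\Emb(B',\calO)$ forces condition~(\ref{eq:3}) to hold for $B'$ and the genus $\scrG_\grn$; thus Theorem~\ref{thm:maclachlan} is applicable to the pair $(B',\calO)$. Moreover, since $\calO\in[\calO]_\sg$ and $\Emb(B',\calO)\neq\emptyset$, Definition~\ref{defn:spinor-genus}(3) gives $\Delta(B',\calO)=1$.

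It then remains to substitute $\calO'=\calO$ and $B'=\varphi^{-1}(\calO)$ into~(\ref{eq:120}). Because $\calO\sim\calO$, the element $\rho(\calO,\calO)\in\Gal(\Sigma_\grn/F)$ is the identity, so its restriction $\rho(\calO,\calO)|_K$ is the identity of $\Gal(K/F)$, i.e.\ $0$ under the identification $\Gal(K/F)=\zmod{2}$. Combining this with $\Delta(B',\calO)=1$, formula~(\ref{eq:120}) collapses to
\[
\Delta(B,\calO)=\bigl(\grf(B'/B),\,K/F\bigr)+0+1=\bigl(\grf(B'/B),\,K/F\bigr)+1,
\]
which is exactly the claimed identity. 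There is no genuine difficulty here: all the substantive content was already isolated in Theorem~\ref{thm:maclachlan}, and the only point requiring a moment's care is verifying that $B'=\varphi^{-1}(\calO)$ meets the standing hypothesis~(\ref{eq:3}) so that the theorem may legitimately be invoked.
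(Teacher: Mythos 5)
Your proposal is correct and follows exactly the route the paper intends (the paper offers only the one-line remark that one takes $\calO'=\calO$ and a suitable $B'$ in formula~(\ref{eq:120})): you specialize the theorem to $\calO'=\calO$, $B'=\varphi^{-1}(\calO)$, observe $\rho(\calO,\calO)|_K=0$ and $\Delta(B',\calO)=1$ because $\varphi\in\Emb(B',\calO)$, and your verification that $B'$ is a full-rank order satisfying condition~(\ref{eq:3}) is the only point needing care and is handled correctly.
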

\begin{proof}[Proof of Theorem~\ref{thm:maclachlan}]
As usual, all the orders in $K$ (such as $B, B', B_0$
etc.) considered below 
 are assumed to satisfy condition (\ref{eq:3}).   
 We make a couple of simplifications.  
\begin{enumerate}[label=(\alph*)]
\item If   (\ref{eq:120}) holds for a fixed pair of orders $\calO_0,
  \calO_0'\in \scrG$, then it holds for every pair $\calO, \calO'\in
  \scrG$. This follows directly from (\ref{eq:164}) and the additive
  property of $\rho(\calO, \calO')$  in (\ref{eq:26}). 
\item If (\ref{eq:120}) holds
  for $B=B_0$ for a fixed order $B_0$ (with $B'$ still being arbitrary), then it holds for every $B$. This again follows  from
  (\ref{eq:164}) and the multiplicative property of the relative
  conductors: 
  \[\grf(B'/B)=\grf(B'/B_0)\grf(B_0/B).\]
\end{enumerate}

Now  pick $B_0$ to be the unique order in $K$ satisfying 
\begin{equation}
  \label{eq:49}
  i_\grp(B_0)=
  \begin{cases}
    0 &\text{ if $n_\grp$ is odd,}    \\
    n_\grp& \text{ if $n_\grp$ is even}. 
  \end{cases}
\end{equation}
Since $n_\grp=0$ for almost all $\grp$, such an order does
exist in $K$. Moreover, it satisfies condition (\ref{eq:3}) by Lemma~\ref{lem:eich-inv-nonzero} and 
Lemma~\ref{lem:Guo-Qin}. Thus there exists an order $\calO_1\in
\scrG_\grn$ that admits an  optimal embedding 
$\varphi\in \Emb(B_0, \calO_1)$.  We
shall modify $\calO_1$ locally at finite many places to obtain another
order $\calO_0\in\scrG_\grn$ that is more conductive to our purpose.
For each $\varepsilon\in \{\pm 1\}$, let  $T_\varepsilon$  be
the following finite set of primes $\grp\subset O_F$:
\begin{equation}
  \label{eq:52}
T_\varepsilon:=\{\grp\mid (K/\grp)=\varepsilon \text{ and } i_\grp(B')\neq i_\grp(B_0)  \}.
\end{equation}
It follows from Lemma~\ref{lem:eich-inv-nonzero} and the assumption
$K\subseteq \Sigma_\grn$ that $n_\grp$ is even for every
$\grp\in T_{-1}$.  In particular, $i_\grp(B_0)=n_\grp$ for every $\grp\in
T_{-1}$ by definition.

At each prime $\grp\subset O_F$ inert in $K$, there exists an element
$\omega_\grp\in O_{K_\grp}^\times$ satisfying a quadratic equation
\begin{equation}
  \label{eq:53}
  \omega_\grp^2-t_\grp \omega_\grp+u_\grp=0, \qquad \text{with}\qquad 
  t_\grp\in O_{F_\grp}, \, u_\grp\in O_{F_\grp}^\times 
\end{equation}
such that $O_{K_\grp}=O_{F_\grp}+\omega_\grp O_{F_\grp}$.  For each
$\grp\in T_{-1}$, we choose a suitable identification $D_\grp=M_2(F_\grp)$ such that
$\varphi: K_\grp\to D_\grp$ sends $\omega_\grp$ to the element $
\begin{bmatrix}
  0 & -u_\grp \\ 1 & t_\grp
\end{bmatrix}$, where $\varphi\in \Emb(B_0, \calO_1)$ is the optimal
embedding as above.  Let $\pi_\grp$ be a uniformizer in $F_\grp$. 
There is a unique order $\calO_0\in \scrG_\grn$
satisfying the following local properties:
\[ (\calO_0)_\grp=
  \begin{bmatrix}
    O_{F_\grp}&     O_{F_\grp}\\
      \pi_\grp^{n_\grp}  O_{F_\grp}&     O_{F_\grp}
  \end{bmatrix}, \quad \forall \grp\in T_{-1}, \qquad \text{and}\qquad
  (\calO_0)_\grp=(\calO_1)_\grp, \quad \forall \grp\not\in T_{-1}. 
\]
By our construction, for every $\grp\in T_{-1}$ we have 
\[\varphi(K_\grp)\cap
  (\calO_0)_\grp=\varphi(O_{F_\grp}+\pi_\grp^{n_\grp}\omega_\grp
  O_{F_\grp})=\varphi((B_0)_\grp), \]
so $\varphi$ defines  an optimal embedding of $B_0$ into $\calO_0$. In
particular, $\Delta(B_0,\calO_0)=1$.

Next, we produce another order $\calO_0'$ such that $\varphi$ defines
an optimal embedding of $B'$ into $\calO_0'$. For each $\grp$ in $T_1$,
we pick an element $x_\grp\in D_\grp^\times$ such that $\varphi\in
\Emb(B_\grp', x_\grp(\calO_0)_\grp x_\grp^{-1})$. Such an $x_\grp$
exists by part (1) of Lemma~\ref{lem:Guo-Qin}. For each $\grp\in T_{-1}$,
we put $x_\grp=
\begin{bmatrix}
  0 & 1 \\ \pi_\grp^{i_\grp'} & 0
\end{bmatrix}$, where $i_\grp':=i_\grp(B')$. One easily computes that
\[x_\grp (\calO_0)_\grp x_\grp^{-1}=
  \begin{bmatrix}
    O_{F_\grp}&     \pi_\grp^{n_\grp-i_\grp'} O_{F_\grp}\\
      \pi_\grp^{i_\grp'}  O_{F_\grp}&     O_{F_\grp}
  \end{bmatrix}.
\]
We have $n_\grp-i_\grp' \leq i_\grp'$ by part (2) of
Lemma~\ref{lem:Guo-Qin}.  Since $u_\grp\in O_{F_\grp}^\times$ by
construction, it follows that 
\[\varphi(K_\grp)\cap
  x_\grp (\calO_0)_\grp x_\grp^{-1}=\varphi(O_{F_\grp}+\pi_\grp^{i_\grp'}\omega_\grp
  O_{F_\grp})=\varphi(B_\grp'),  \qquad \forall \grp\in T_{-1}.    \]
Lastly, we define $\calO_0'$ to be the unique order such that
$\wcO_0'=x \wcO_0 x^{-1}$, where $x\in \whD^\times $ is the element such that
$x_\grp$ is defined above if $\grp\in T_1\cup T_{-1}$, and $x_\grp=1$
at every other $\grp$. The construction above guarantees that
$\varphi\in \Emb(B', \calO_0')$, so $\Delta(B', \calO_0')=1$. 

Now to show that formula (\ref{eq:120}) holds for $B=B_0,
\calO=\calO_0$ and $\calO'=\calO'_0$, it is enough to check that 
\begin{equation}
  \label{eq:54}
\big(\grf(B'/B_0),
    K/F\big)+\rho(\calO_0, \calO_0')|_K=0.  
\end{equation}
From (\ref{eq:49}), we have $\grf(B'/B_0)=\prod_{\grp\in T_{-1}}
\grp^{n_\grp-i_\grp'}\prod_{\grp\in T_1}
\grp^{i_\grp(B_0)-i_\grp'}$.  Since $(\grp, K/F)$ vanishes if $\grp\in
T_1$ and  $n_\grp$ is even at every $\grp\in T_{-1}$, we obtain
\begin{equation}
  \label{eq:55}
\big(\grf(B'/B_0),
    K/F\big)=\sum_{\grp\in T_{-1}} (\grp^{-i_\grp'}, K/F),    
\end{equation}
where $\Gal(K/F)$ is identified with $\zmod{2}$ as usual.
 On the other hand, 
 \begin{equation}
   \label{eq:56}
\rho(\calO_0, \calO_0')|_K=(\Nr(x), K/F)=\sum_{\grp\in T_{-1}}
  (\grp^{i_\grp'}, K/F),   
 \end{equation}
where the last equality holds because those $x_\grp$ for $\grp\in T_1$
make
no contribution to the Artin
symbol.  Now (\ref{eq:54}) follows from combining (\ref{eq:55})
and (\ref{eq:56}), and the Theorem is proved by the observations at
the start of the proof. 
\end{proof}

\section{The spinor trace formula}\label{sec:spinor-trace-formula}

In this section, we assume that  $\scrG$ is an arbitrary genus  of
orders in $D$, and $K$ is not necessarily contained in
$\Sigma_\scrG$.  Let $\calO$ be an order in $\scrG$,  and  $B$ be an order in $K$
satisfying condition (\ref{eq:3}). Recall from (\ref{eq:22}) that $m(B, \calO,
\calO^\times)$ denotes the number of optimal embeddings of $B$ into
$\calO$ up to $\calO^\times$-conjugacy. Our goal of this section is to
generalize the Vign\'eras-Voight formula \cite[Corollary~31.1.10]{voight-quat-book} for $m(B, \calO,
\calO^\times)$ to the totally definite case. Let us first set up some
notations and reproduce this
formula for the convenience of the reader. 

 By definition, the class number of
$\calO$ (denoted as $h(\calO)$) is the cardinality of the finite set
$\Cl(\calO)$ of locally principal right $\calO$-ideal classes
in $D$.  It is well known that $h(\calO)$ depends only on the genus
$\scrG$ and not on the choice of $\calO\in \scrG$. Similarly, let
$h(B)$ denote the class number of $B$. We define the symbol
\begin{equation}
  s(K, \scrG)=
  \begin{cases}
    1 & \text{if }  K\subseteq \Sigma_\scrG;\\
    0 & \text{otherwise}. 
  \end{cases}
\end{equation}

\begin{prop}[Vign\'eras-Voight]\label{prop:vv}
  Suppose that $D$ satisfies the Eichler condition, and $\scrG$ is a
  genus of Eichler orders. Then
  \begin{equation}
    \label{eq:11}
    m(B, \calO, \calO^\times)=\frac{2^{s(K, \scrG)}\Delta(B,
      \calO)h(B)}{h(\calO)}\prod_{\grp} m_\grp(B) ,  
\end{equation}
where $m_\grp(B):=m(B_\grp, \calO_\grp,
  \calO_\grp^\times)$, and the product runs over all primes
  $\grp\subset O_F$. 
\end{prop}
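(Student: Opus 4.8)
The plan is to deduce Proposition~\ref{prop:vv} by combining the classical trace formula for optimal embeddings with the optimal spinor selectivity theorem for Eichler orders established above; this is essentially Voight's argument for \cite[Corollary~31.1.10]{voight-quat-book}. First I would dispose of the degenerate case: if $\Delta(B,\calO)=0$ then, since $D$ satisfies the Eichler condition, each spinor genus is a single type (Remark~\ref{rem:Eichler-con}), so $\Emb(B,\calO)=\emptyset$ and $m(B,\calO,\calO^\times)=0$, while the right-hand side of \eqref{eq:11} vanishes as well; moreover, if $B$ is not selective then $\Delta(B,\calO)=1$ automatically. So it remains to treat $\Delta(B,\calO)=1$: fix $\varphi\in\Emb(B,\calO)$ and identify $K$ with $\varphi(K)$ and $B$ with $\varphi(B)$.

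The first main input is the trace formula \cite[Theorem~III.5.11]{vigneras}, \cite[Theorem~30.4.7]{voight-quat-book}: letting $[I_1],\dots,[I_h]$ (with $h=h(\calO)$) be the classes of $\Cl(\calO)$, with left orders $\calO_i:=\calO_l(I_i)\in\scrG$, one has $\sum_{i=1}^{h}m(B,\calO_i,\calO_i^\times)=h(B)\prod_\grp m_\grp(B)$, a finite product. For completeness I would recall its proof in the present notation: with $\whE:=\whE(\varphi,B,\calO)\supseteq\whK^\times\calN(\wcO)$ as in \eqref{eq:167}, transporting $\varphi$ along adelic conjugation gives a standard bijection between $\coprod_i\Emb(B,\calO_i)/\calO_i^\times$ and $K^\times\backslash\whE/\wcO^\times$; this double coset space fibers over $\whK^\times\backslash\whE/\wcO^\times$, which has cardinality $\prod_\grp m_\grp(B)$ by local--global compatibility and \eqref{eq:158}, and every fiber is in bijection with $\Cl(B)$ because $K\cap g\wcO g^{-1}=B$ for all $g\in\whE$; counting yields the formula.

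The second input, and the real content, is the bookkeeping. By Theorem~\ref{thm:Voight31}(iii) the number $m(B,\calO_i,\calO_i^\times)$ equals a single value $m_0=m(B,\calO,\calO^\times)$ on all $\calO_i$ with $\Emb(B,\calO_i)\neq\emptyset$ and is $0$ otherwise; since the Eichler condition gives $\Delta(B,\calO_i)=1\iff\Emb(B,\calO_i)\neq\emptyset$, we get $m(B,\calO_i,\calO_i^\times)=\Delta(B,\calO_i)\,m_0$ for every $i$. Next, by strong approximation (applicable since $D$ satisfies the Eichler condition) the natural surjection $\Cl(\calO)\to\SG(\scrG)$, $[I]\mapsto[\calO_l(I)]_\sg$, is identified via the reduced norm with a surjective homomorphism of finite abelian groups $F_D^\times\backslash\whF^\times/\Nr(\wcO^\times)\twoheadrightarrow F_D^\times\backslash\whF^\times/\Nr(\calN(\wcO))$ (compare \eqref{eq:118} and \eqref{eq:8}); hence all its fibers have the same cardinality $h(\calO)/|\SG(\scrG)|$. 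Finally, when $B$ is not selective every spinor genus has $\Delta=1$, and when $B$ is selective exactly half of the spinor genera do, by Theorem~\ref{thm:selectivity}(2) (equivalently, the selected ones form the index-$2$ subgroup $\Gal(\Sigma_\scrG/K)$ under $\SG(\scrG)\cong\Gal(\Sigma_\scrG/F)$). Using that for Eichler orders $e_\grp(\scrG)\neq 0$ for all $\grp$, so that $B$ is selective iff $K\subseteq\Sigma_\scrG$ iff $s(K,\scrG)=1$ (Corollary~\ref{cor:non-zero-eichler-inv} and the definition of $s(K,\scrG)$), we obtain $\#\{i:\Delta(B,\calO_i)=1\}=h(\calO)/2^{s(K,\scrG)}$. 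Combining with the trace formula, $h(\calO)\,m_0/2^{s(K,\scrG)}=\sum_i\Delta(B,\calO_i)\,m_0=h(B)\prod_\grp m_\grp(B)$, which rearranges to \eqref{eq:11} since $m_0=m(B,\calO,\calO^\times)$ and $\Delta(B,\calO)=1$.

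Thus the only non-formal ingredients are the trace formula and Theorem~\ref{thm:Voight31}(iii) (both cited from Vign\'eras and Voight), together with the two structural facts invoked in the previous paragraph: $\Cl(\calO)$ surjects onto $\SG(\scrG)$ with fibers of constant size (a consequence of strong approximation, legitimate here because the Eichler condition holds), and exactly half the spinor genera are selected in the selective case. The point I would be most careful about is keeping these consistent across the dichotomy selective/non-selective and the degenerate case $\Delta(B,\calO)=0$, so that the factor $2^{s(K,\scrG)}\Delta(B,\calO)$ on the right of \eqref{eq:11} comes out correctly in every case; everything else is routine.
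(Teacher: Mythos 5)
Your proof is correct and follows exactly the route the paper indicates: the paper does not supply an independent argument for Proposition~\ref{prop:vv} but explicitly says its proof ``combines Theorem~\ref{thm:Voight31} with the classical trace formula'' (\cite[Theorem~III.5.11]{vigneras}, \cite[Theorem~30.4.7]{voight-quat-book}), which is precisely the trace formula $+$ Theorem~\ref{thm:Voight31}(iii) $+$ constant-fiber-size counting that you carry out. The only cosmetic difference is that the paper later reproves this statement as a corollary of the more general spinor trace formula (Proposition~\ref{prop:spinor-trace-formula}, see Remark~\ref{rem:nonzero}(a)), whereas you reproduce Voight's original derivation directly; both are valid and your bookkeeping of the factor $2^{s(K,\scrG)}\Delta(B,\calO)$ across the selective, non-selective, and $\Delta=0$ cases is handled correctly.
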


The product on the right hand side of \eqref{eq:11} is well-defined since $m_\grp(B)=1$ for all but
  finitely many $\grp$ according to
  \cite[Theorem~II.3.2]{vigneras}.  The class number $h(B)$ can be
  computed using Dedekind's formula \cite[p.~75]{vigneras:ens} (See
  also \cite[\S7]{li-xue-yu:unit-gp} and \cite[\S5, p.~804]{xue-yu:type_no}). 
Moreover, under the assumptions of
  Proposition~\ref{prop:vv}, the class number $h(\calO)$ coincides
  with the \emph{restricted class number $h_D(F):=\abs{F_D^\times\bsh \whF^\times/\whO_F^\times}$ of $F$ with respect to
  $\grD$} as in  \cite[Corollary~III.5.7]{vigneras}. The formula for
the local optimal embedding numbers 
$m_\grp(B)$ has been work out by many people in various generality,
 notably by Eichler \cite{eichler:crelle55}, Hijikata
 \cite{hijikata:JMSJ1974}, Pizer\cite{pizer:2},  and Brzezinski
 \cite{Brzezinski-embed-num-1991, Brzezinski-crelle-1990}.  See
 \cite[Lemmas~30.6.16--17]{voight-quat-book} for the exposition in the
 case of Eichler orders.

A priori, (\ref{eq:11}) is only stated in
\cite[Corollary~31.1.10]{voight-quat-book} for the non-selective case
(i.e.~$s(K, \Sigma_\scrG)=0$ so that $\Delta(B, \calO)=1$ for every
$\calO\in \scrG$). Nevertheless, the same proof applies to the
selective case as well.   Indeed, the proof combines
Theorem~\ref{thm:Voight31} with the following 
classical  \emph{trace formula}
\cite[Theorem~III.5.11]{vigneras} (cf.~\cite[Lemma~3.2.1]{xue-yang-yu:ECNF} and
\cite[Lemma~3.2]{wei-yu:classno}) for optimal embeddings:
  \begin{equation}
    \label{eq:51}
\sum_{[I]\in \Cl(\calO)} m(B,\calO_l(I),
    \calO_l(I)^\times)=h(B)\prod_\grp m_\grp(B). 
  \end{equation}
Here $[I]$ denotes the right $\calO$-ideal class of $I$, and  $\calO_l(I)$ is the left order of $I$ defined in
(\ref{eq:12}). 
The trace formula holds for any quaternion algebras,
i.e.~it does not require the Eichler condition.



As remarked by Voight \cite[Remark~31.6.2]{voight-quat-book}, in the
totally definite case it is no longer viable to produce a formula for
 $m(B, \calO, \calO^\times)$ for every single $\calO\in \scrG$.
 Instead, we group the right $\calO$-ideal classes into \emph{spinor
   classes} and produce a refinement of the trace formula.

 \begin{defn}
Two locally
  principal right $\calO$-ideals $I$ and $I'$ are in the   \emph{same
spinor class} if there exists  $x\in
\grD^\times\whD^1$ such that $\wh I'= x \wh I$. 
  \end{defn}

Note that if $I$ and $I'$ belong to the same spinor class, then their
left orders $\calO_l(I)$ and $\calO_l(I')$ are in the same spinor
genus.   
The spinor class of $I$ is
  denoted by $[I]_\scc$. For a fixed spinor class $[I]_\scc$, the set
  of ideal classes in  $[I]_\scc$ is denoted by $\Cl(\calO,
  [I]_\scc)$. In other words, 
  \begin{equation}
    \label{eq:125}
    \Cl(\calO,
  [I]_\scc):=\{[I']\in \Cl(\calO)\mid [I']\subseteq [I]_\scc\}.
  \end{equation}
For simplicity, we put $\Cl_\scc(\calO):=\Cl(\calO, [\calO]_\scc)$.

  Let $\SCl(\calO)$ be the finite set of spinor classes of locally principal
     right $\calO$-ideals. It can be described adelically as follows
\begin{equation}
  \label{eq:14}
  \SCl(\calO)\simeq (\grD^\times\whD^1)\bsh \whD^\times/\wcO^\times\xrightarrow[\simeq]{\Nr}
  F_\grD^\times\bsh \whF^\times/\Nr(\wcO^\times),   
\end{equation}
where the  two double coset spaces are canonically bijective via
the reduced norm map.
This equips $\SCl(\calO)$ with an abelian group structure (whose
identity element is $[\calO]_\scc$),   so we
call it  the \emph{spinor class group} of $\calO$. In light of the
adelic description of $\SG(\calO)$ in \eqref{eq:118}, there is a canonical surjective
group homomorphism
\begin{equation}
  \label{eq:58}
   \SCl(\calO)\twoheadrightarrow \SG(\calO),\qquad
  [I]_\scc\mapsto [\calO_l(I)]_\sg. 
\end{equation}

Our spinor trace formula can be stated as follows. 

\begin{prop}[Spinor trace formula]\label{prop:spinor-trace-formula}
Let $\scrG$ be an arbitrary genus of orders in $D$. 
  Suppose that either $K\cap \Sigma_\scrG=F$ or $B$ is selective for $\scrG$.
Then for each  $[J]_\scc\in
\SCl(\calO)$, we have 
\begin{equation}
\label{eq:251}
\sum_{[I]\in \Cl(\calO, [J]_\scc)}
m(B,\calO_l(I),
    \calO_l(I)^\times)=\frac{2^{s(K, \scrG)}\Delta(B,
      \calO_l(J))h(B)}{\abs{\SCl(\calO)}}\prod_{\grp} m_\grp(B). 
\end{equation}
\end{prop}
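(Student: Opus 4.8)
The plan is to run the classical averaging argument behind the trace formula \eqref{eq:51}, but over one spinor class at a time rather than over all of $\Cl(\calO)$. First I would recall that the adelic description \eqref{eq:14} identifies $\SCl(\calO)$ with $F_\grD^\times\bsh\whF^\times/\Nr(\wcO^\times)$ and that the surjection \eqref{eq:58} to $\SG(\calO)$ is a group homomorphism; under the correspondence between right $\calO$-ideal classes and the double coset space $\grD^\times\bsh\whD^\times/\wcO^\times$, the subset $\Cl(\calO,[J]_\scc)$ is exactly the fiber over the image of $[J]_\scc$. The key bookkeeping fact is that $\abs{\Cl(\calO,[J]_\scc)}$ is independent of $[J]_\scc$: the group $\grD^\times\whD^1$ acts on $\grD^\times\bsh\whD^\times/\wcO^\times$ with orbits the spinor classes, and translation by a fixed adele of reduced norm generating a given class in $\SCl(\calO)$ gives a bijection between any two fibers. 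Hence $\abs{\Cl(\calO)}=\abs{\SCl(\calO)}\cdot\abs{\Cl(\calO,[J]_\scc)}$ for every $[J]_\scc$.

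Next I would examine how $m(B,\calO_l(I),\calO_l(I)^\times)$ behaves as $[I]$ ranges over a single spinor class. By Theorem~\ref{thm:selectivity}, under the hypothesis that either $K\cap\Sigma_\scrG=F$ or $B$ is selective, the quantity $\Delta(B,\calO_l(I))$ depends only on the spinor genus $[\calO_l(I)]_\sg$, hence (via \eqref{eq:58}) only on the spinor class $[I]_\scc$; so on $\Cl(\calO,[J]_\scc)$ it is the constant $\Delta(B,\calO_l(J))$. When $\Delta(B,\calO_l(J))=0$ the left-hand side of \eqref{eq:251} is a sum of zeros by definition of $\Delta$, matching the right-hand side, so assume $\Delta(B,\calO_l(J))=1$. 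In that case every $\calO_l(I)$ with $[I]\in\Cl(\calO,[J]_\scc)$ lies in a single spinor genus on which optimal embeddings from $B$ exist, and I would invoke the global-local analysis (the identification of $E_\op$ with $F$ or $K$ in Corollary~\ref{cor:2case-Eop}, together with \eqref{eq:262}) to see that the adelic embedding set $\whE=\whE(\varphi,B,\calO)$ for an optimal $\varphi$ is a union of $\whK^\times\cdot x\cdot\wcO^\times$-cosets whose images under $\Nr$ cover the subgroup $F_\grD^\times\Nr(\whE)$ of index $2^{s(K,\scrG)}$ in $\whF^\times$.

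The heart of the argument is then to sum the mass identity over the spinor class. Summing $m(B,\calO_l(I),\calO_l(I)^\times)$ over all $[I]\in\Cl(\calO)$ gives $h(B)\prod_\grp m_\grp(B)$ by \eqref{eq:51}; the contributions are supported on those $[I]$ whose spinor class is "selected'', i.e.\ on $2^{-s(K,\scrG)}\abs{\SCl(\calO)}$ of the $\abs{\SCl(\calO)}$ spinor classes when $s(K,\scrG)=1$ and on all of them when $s(K,\scrG)=0$ — this is precisely the "exactly half'' statement of Theorem~\ref{thm:selectivity}(2), transported through \eqref{eq:58}. Within each selected spinor class the sum $\sum_{[I]\in\Cl(\calO,[J]_\scc)}m(B,\calO_l(I),\calO_l(I)^\times)$ is independent of $[J]_\scc$: translating $I$ by an idele of reduced norm $1$ is an isomorphism of the relevant $\Emb$-sets compatible with the $\calO_l(I)^\times$-actions, so the sum is a "spinor mass'' depending only on the genus. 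Dividing the total $h(B)\prod_\grp m_\grp(B)$ equally among the $2^{-s(K,\scrG)}\abs{\SCl(\calO)}$ selected classes yields the value $\frac{2^{s(K,\scrG)}h(B)}{\abs{\SCl(\calO)}}\prod_\grp m_\grp(B)$ on each, and $0$ on the non-selected ones, which is exactly \eqref{eq:251} once the factor $\Delta(B,\calO_l(J))$ is reinserted. The step I expect to be the main obstacle is making rigorous the claim that the spinor-class sum $\sum_{[I]\in\Cl(\calO,[J]_\scc)}m(B,\calO_l(I),\calO_l(I)^\times)$ is genuinely constant across selected classes — equivalently, that the $\grD^\times\whD^1$-translation bijection on ideal classes lifts to a bijection on optimal-embedding orbits — since this requires tracking the $\whK^\times$- and $\calN(\wcO)$-equivariance in \eqref{eq:170} carefully and is where the Eichler-condition shortcut used in \cite{voight-quat-book} is unavailable.
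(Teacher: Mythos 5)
Your overall architecture matches the paper's: reduce \eqref{eq:251} to the claim that the spinor-class sum $\sum_{[I]\in \Cl(\calO,[J]_\scc)} m(B,\calO_l(I),\calO_l(I)^\times)$ takes the same value on every \emph{selected} spinor class (this is \eqref{eq:132} in the paper), then combine the classical trace formula \eqref{eq:51} with the ``exactly half'' count of selected classes. The $\Delta=0$ case and the counting of selected classes are handled correctly. However, the one step you yourself flag as the main obstacle --- constancy of the spinor-class sum across \emph{distinct} selected spinor classes --- is exactly where your argument fails. You propose to establish it by ``translating $I$ by an idele of reduced norm $1$,'' but two ideals lie in the same spinor class precisely when they differ by an element of $\grD^\times\whD^1$; a reduced-norm-one translation therefore never moves you from one spinor class to another, so it can only show the sum is well defined on a single class, not that it agrees between two different selected classes. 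Dividing the total $h(B)\prod_\grp m_\grp(B)$ ``equally'' among the selected classes presupposes precisely the equality that is missing.

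The missing idea is the following, and it is where the hypothesis ($K\cap\Sigma_\scrG=F$ or $B$ selective) actually enters. After reducing to an $\calO$ with $\varphi\in\Emb(B,\calO)$ and to $J'=\calO$, write $\whJ=z\wcO$. Selectedness of $[J]_\scc$ means $\Nr(z)\in F_\grD^\times\Nr(\whE)$, and the hypothesis forces
\[
F_\grD^\times\Nr(\whK^\times)\Nr(\calN(\wcO))=F_\grD^\times\Nr(\whE)
\]
(both sides equal $\whF^\times$ in the non-selective case, and both equal $F_K^\times\Nr(\whK^\times)$ in the selective case). Hence one can write $yz=ku$ with $y\in\grD^\times\whD^1$, $k\in\whK^\times$, $u\in\calN(\wcO)$, and the left $\whK^\times$-invariance and right $\calN(\wcO)$-invariance of $\whE$ give $\whE\cap[\whJ]_\scc=k\,(\whE\cap[\wcO]_\scc)\,u$ as in \eqref{eq:136}; the map $x\mapsto kxu$ then induces the required bijection on $K^\times\bsh(\cdot)/\wcO^\times$, which via the adelic count of Lemma~\ref{lem:adelic-trace} yields \eqref{eq:132}. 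Your second paragraph gestures at the right ingredients ($E_\op$, \eqref{eq:262}, the index-$2^{s(K,\scrG)}$ subgroup), but without the factorization $yz=ku$ the transport between spinor classes is not justified, and this is not a formality: in the excluded case $F=E_\op\subsetneq K\cap\Sigma_\scrG=K$ the equality \eqref{eq:132} is not known to hold, so any proof that does not visibly use the hypothesis at this step cannot be correct.
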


\begin{rem}\label{rem:nonzero}
We make a few observations.
  \begin{enumerate}[label=(\alph*)]
  \item If $\grD$ is
  further assumed to satisfy the Eichler condition, then
  $\Cl(\calO, [J]_\scc)$ is a singleton with the unique member $[J]$
  by \cite[Proposition~1.1]{Brzezinski-Spinor-Class-gp-1983},
  so we recover a slightly more general form of the Vign\'eras-Voight
  formula  (\ref{eq:11}).

  \item   From Corollary~\ref{cor:non-zero-eichler-inv}, if
  $e_\grp(\calO)\neq 0$ for every finite prime $\grp$ of $F$ (e.g.~if  $\calO$ is an Eichler order), then the
  assumption of the proposition holds automatically.
  Moreover, in this case $\abs{\SCl(\calO)}$ is equal to the
  restricted class number $h_D(F)$ of $F$ with respect to
  $D$ since $\Nr(\calO_\grp^\times)=O_{F_\grp}^\times$ for every
   prime $\grp\subset O_F$. 
\item Form Corollary~\ref{cor:2case-Eop}, $B$ is selective for the genus $\scrG$
  if and only if $E_\op=K$, where $E_\op$ is the optimal
  representation field in
  Definition~\ref{defn:op-rep-field}. Recall from  (\ref{eq:5})  that
  we have the following chain of inclusions: 
  \[     F\subseteq E_\op\subseteq K\cap \Sigma_\scrG\subseteq K. \]
The assumption of the proposition covers the two cases below: \[F=E_\op=K\cap
  \Sigma_\scrG \subsetneq K, \qquad F\subsetneq E_\op=K\cap
  \Sigma_\scrG= K.\] In the last remaining case where
$F=E_\op\subsetneq K\cap \Sigma_\scrG= K$, it is not known whether
(\ref{eq:251}) still  holds true or not. Examples of $(B,\calO)$ falling
into this third case  do exist, as shown by Peng and the first named
author in \cite[\S5]{peng-xue:select}. 
 \end{enumerate}
\end{rem}

To prove Proposition~\ref{prop:spinor-trace-formula}, 
we reformulate the summation in (\ref{eq:251}) 
 adelically. If $\whI=x\wcO$ for some $x\in \whD^\times$, then we
put 
\begin{equation}
  \label{eq:258}
[\whI]_\scc:=\grD^\times\whD^1x\wcO^\times=\grD^\times\whD^1\wcO^\times
x,
\end{equation}
where the last equality follows from the fact that
\begin{equation}
  \label{eq:255}
  \whD^1\wcO^\times=  \whD^1x\wcO^\times x^{-1}, \qquad \forall x\in \whD^\times. 
\end{equation}
Clearly,  $[\whI]_\scc$ 
depends only on the spinor class
$[I]_\scc$.
The set  $\Cl(\calO,
  [I]_\scc)$ may be described adelically as 
\begin{equation}
  \label{eq:126}
  \begin{split}
   \Cl(\calO,  [I]_\scc)&\simeq \grD^\times\bsh [\whI]_\scc/\wcO^\times\simeq \grD^\times\bsh (\grD^\times \whD^1
   x \wcO^\times x^{-1})/(x\wcO^\times x^{-1}), 
  \end{split}
\end{equation}
where the last isomorphism is induced from the right multiplication of
$[\whI]_\scc$ by $x^{-1}$. 
If we set $\calO':=\calO_l(I)$ so that $\wcO'=x\wcO x^{-1}$, then there is a
bijection 
\begin{equation}
  \label{eq:229}
   \Cl(\calO,  [I]_\scc)\simeq  \Cl(\calO',  [\calO']_\scc)=:\Cl_\scc(\calO').
 \end{equation}
  Fix an embedding $\varphi: K\hookrightarrow \grD$ and
 identify $K$ with its image in $\grD$ as before.  At the moment, we
 do not require $\varphi\in \Emb(B, \calO)$ yet. Let $\whE:=\whE(\varphi, B,
\calO)$ be as defined in (\ref{eq:158}). Using the same method 
 as in the proofs of
 \cite[Theorem~III.5.11]{vigneras} and
 \cite[Theorem~30.4.7]{voight-quat-book}, we immediately obtain the
 following lemma. 

\begin{lem}\label{lem:adelic-trace}
We have 
\begin{equation}\label{eq:137}
  \sum_{[I]\in \Cl(\calO, [J]_\scc)} m(B,\calO_l(I),
    \calO_l(I)^\times)=  \abs{K^\times\bsh(\whE\cap [\whJ]_\scc)/\wcO^\times}. 
\end{equation}
\end{lem}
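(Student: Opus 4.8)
The plan is to closely follow the argument behind the classical trace formula (the proofs of \cite[Theorem~III.5.11]{vigneras} and \cite[Theorem~30.4.7]{voight-quat-book}), carrying the spinor class along at every step; this introduces no genuinely new difficulty, only extra bookkeeping. As in the paragraph preceding the lemma, I identify $K$ with $\varphi(K)\subset\grD$, so that $K^\times$ is the centralizer of $K$ in $\grD^\times$; note that the argument uses only that $\varphi$ is an $F$-embedding, not that $\varphi\in\Emb(B,\calO)$, so $1$ need not lie in $\whE$.

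First I would record the adelic dictionary together with its spinor refinement. For $[I]\in\Cl(\calO)\simeq\grD^\times\bsh\whD^\times/\wcO^\times$ choose $x_I\in\whD^\times$ with $\whI=x_I\wcO$; then $\wcO_l(I)=x_I\wcO x_I^{-1}$, hence $\calO_l(I)^\times=\grD^\times\cap x_I\wcO^\times x_I^{-1}$, and $\whD^\times/\wcO^\times=\bigsqcup_{[I]\in\Cl(\calO)}\grD^\times\bar x_I$ is the decomposition into $\grD^\times$-orbits, where $\bar x_I:=x_I\wcO^\times$ and the stabilizer of $\bar x_I$ is $\calO_l(I)^\times$. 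By \eqref{eq:258} the set $[\whJ]_\scc=\grD^\times\whD^1 x_J\wcO^\times$ is right $\wcO^\times$-stable, so it has an image $[\whJ]_\scc/\wcO^\times$ in $\whD^\times/\wcO^\times$; this image is left $\grD^\times$-stable, hence a union of orbits $\grD^\times\bar x_I$, and by the definition of $\Cl(\calO,[J]_\scc)$ (cf.\ \eqref{eq:126}) it is exactly $\bigsqcup_{[I]\in\Cl(\calO,[J]_\scc)}\grD^\times\bar x_I$.

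Next I would establish, for each $[I]\in\Cl(\calO)$, a bijection $\Emb(B,\calO_l(I))/\calO_l(I)^\times\xrightarrow{\sim}K^\times\bsh\bigl((\whE/\wcO^\times)\cap\grD^\times\bar x_I\bigr)$. By Skolem--Noether every $F$-embedding $K\hookrightarrow\grD$ is $\grD^\times$-conjugate to $\varphi$, so each $\varphi'\in\Emb(B,\calO_l(I))$ has the form $\delta^{-1}\varphi\delta$ with $\delta\in\grD^\times$; conjugating $\varphi'(K)\cap\calO_l(I)=\varphi'(B)$ by $\delta$, using $\widehat{\delta\calO_l(I)\delta^{-1}}=(\delta x_I)\wcO(\delta x_I)^{-1}$ and that equality of $O_F$-lattices in $K$ is tested prime by prime, one gets $\varphi'\in\Emb(B,\calO_l(I))$ iff $\varphi(K_\grp)\cap\delta x_{I,\grp}\calO_\grp x_{I,\grp}^{-1}\delta^{-1}=\varphi(B_\grp)$ for all $\grp$, i.e.\ iff $\delta x_I\in\whE$ (cf.\ \eqref{eq:158}--\eqref{eq:159}). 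Since the centralizer of $K$ in $\grD$ is $K$, one has $(k\delta)^{-1}\varphi(k\delta)=\delta^{-1}\varphi\delta$ for $k\in K^\times$, while $u^{-1}(\delta^{-1}\varphi\delta)u=(\delta')^{-1}\varphi\delta'$ with $u\in\calO_l(I)^\times$ holds exactly when $\delta'\in K^\times\delta u$; hence $\delta\mapsto[\delta^{-1}\varphi\delta]$ induces a bijection $K^\times\bsh\{\delta\in\grD^\times:\delta x_I\in\whE\}/\calO_l(I)^\times\xrightarrow{\sim}\Emb(B,\calO_l(I))/\calO_l(I)^\times$. Finally $\delta\calO_l(I)^\times\mapsto\delta x_I\wcO^\times$ is injective (if $\delta_1 x_I\wcO^\times=\delta_2 x_I\wcO^\times$ then $\delta_2^{-1}\delta_1\in\grD^\times\cap x_I\wcO^\times x_I^{-1}=\calO_l(I)^\times$) with image $(\whE/\wcO^\times)\cap\grD^\times\bar x_I$, and dividing by $K^\times$ on the left yields the displayed bijection. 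Taking the disjoint union over all $[I]\in\Cl(\calO)$ reproves the classical trace formula; taking it over $[I]\in\Cl(\calO,[J]_\scc)$ and using the orbit description above gives
\[
\sum_{[I]\in\Cl(\calO,[J]_\scc)}m(B,\calO_l(I),\calO_l(I)^\times)=\bigl|K^\times\bsh\bigl((\whE/\wcO^\times)\cap([\whJ]_\scc/\wcO^\times)\bigr)\bigr|.
\]

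Finally I would simplify the right-hand side: since $\whE$ and $[\whJ]_\scc$ are both right $\wcO^\times$-stable, $(\whE/\wcO^\times)\cap([\whJ]_\scc/\wcO^\times)=(\whE\cap[\whJ]_\scc)/\wcO^\times$; and $\whE\cap[\whJ]_\scc$ is left $K^\times$-stable, since $\whE$ is (indeed left $\whK^\times$-stable) and $[\whJ]_\scc$ is left $\grD^\times$-stable with $K^\times\subseteq\grD^\times$. Hence the right-hand side equals $|K^\times\bsh(\whE\cap[\whJ]_\scc)/\wcO^\times|$, which is \eqref{eq:137}. The step I expect to demand the most care --- though it is really only organizational --- is keeping the various $\wcO^\times$- and $K^\times$-invariances straight so that one may pass freely between $\whE\cap[\whJ]_\scc$ and its image in $\whD^\times/\wcO^\times$, and checking that the $\grD^\times$-orbit decomposition of that image is indexed precisely by $\Cl(\calO,[J]_\scc)$; beyond Skolem--Noether and the adelic descriptions of ideal and spinor classes, no further ingredient enters.
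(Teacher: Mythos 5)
Your proposal is correct and is exactly the argument the paper has in mind: the paper omits the proof, saying the lemma follows "immediately" by the method of the classical trace formula (Vign\'eras III.5.11, Voight 30.4.7), and your write-up is precisely that method with the spinor class $[\whJ]_\scc$ carried through as a union of $\grD^\times$-orbits in $\whD^\times/\wcO^\times$. All the invariance checks and the Skolem--Noether step are handled correctly, including the observation that $\varphi$ need not be an optimal embedding of $B$ into $\calO$.
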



We leave it as an exercise using Lemma~\ref{lem:adelic-trace}  to show that if $\calO$ and  $\calO' $
belong to the same spinor genus, then 
\begin{equation}
  \label{eq:211}
  \sum_{[I]\in \Cl_\scc(\calO)} m(B,\calO_l(I),
    \calO_l(I)^\times)=  \sum_{[I']\in \Cl_\scc(\calO')} m(B,\calO_l(I'),
    \calO_l(I')^\times). 
\end{equation}




Now we are ready to prove the spinor trace formula. 

\begin{proof}[Proof of Proposition~\ref{prop:spinor-trace-formula}]
Recall that  the canonical map $\SCl(\calO)\twoheadrightarrow \SG(\calO)$ 
in (\ref{eq:58}) is a group homomorphism. From part (ii) of 
Corollary~\ref{cor:2case-Eop}, if $B$ is selective for $\scrG$, then
exactly half of the spinor classes $[I]_\scc\in \SCl(\calO)$ satisfy that
$\Delta(B, \calO_l(I))=1$. 
  In light of the trace formula (\ref{eq:51}), the 
  proposition reduces to a statement reminiscent of part (iii) of
  Theorem~\ref{thm:Voight31}:   
   if $\Delta(B, \calO_l(J))=\Delta(B,
\calO_l(J'))=1$ for two spinor classes $[J]_\scc$ and $[J']_\scc$ in $\SCl(\calO)$, then 
\begin{equation}
  \label{eq:132}
  \sum_{[I]\in \Cl(\calO, [J]_\scc)} m(B,\calO_l(I),
    \calO_l(I)^\times)=\sum_{[I']\in \Cl(\calO, [J']_\scc)} m(B,\calO_l(I'),
    \calO_l(I')^\times).
\end{equation}

Now let $\calO'\in
\scrG$ be another member of $\scrG$, and $M'$ be a locally principal right
$\calO'$-ideal with $\calO_l(M')=\calO$.  The map $I\mapsto
IM'$ induces a bijection between locally principal right ideals of
$\calO$ and those of $\calO'$, and it preserves ideal classes,
spinor classes, and associated left orders. Therefore,
\begin{equation}
  \label{eq:210}
  \sum_{[I]\in \Cl(\calO, [J]_\scc)} m(B,\calO_l(I),
    \calO_l(I)^\times)=  \sum_{[I']\in \Cl(\calO', [JM']_\scc)} m(B,\calO_l(I'),
    \calO_l(I')^\times).
\end{equation}
Replacing $\calO$ by a suitable $\calO'$ if necessary, we shall 
assume that there exists an optimal embedding $\varphi\in \Emb(B,
\calO)$. Clearly, if  (\ref{eq:132}) holds for
$J'=\calO$, then it holds for all $J'$ with   $\Delta(B,
\calO_l(J'))=1$, so we further take $J'=\calO$.



Keep the optimal embedding $\varphi\in \Emb(B, \calO)$ fixed and identify $K$
with $\varphi(K)\subseteq \grD$ as before.  By the assumption, either $K\cap \Sigma_\scrG=F$ or $B$ is selective for $\scrG$. This guarantees that 
\begin{equation}
  \label{eq:190}
F_\grD^\times
  \Nr(\whK^\times)\Nr(\calN(\wcO))=
  F_\grD^\times\Nr(\whE).
\end{equation}
Indeed, from (\ref{eq:173}) and (\ref{eq:5}),  if $K\cap \Sigma_\scrG=F$, then both sides are equal to
$\whF^\times$; if $B$ is selective for $\scrG$, then both sides are
equal to $F_K^\times\Nr(\whK^\times)$.




Write $\widehat{J}=z\wcO$ for some $z\in \whD^\times$. Then
$\widehat{\calO_l(J)}=z\wcO z^{-1}$. By Lemma~\ref{lem:prim-criterion}, 
the
assumption $\Delta(B, \calO_l(J))=1$ implies that $\Nr(z)\in
F_\grD^\times\Nr(\whE)$. Thanks to (\ref{eq:190}), 
 there exist
 \begin{equation}
   \label{eq:191}
y\in \grD^\times\whD^1,\quad   
k\in \whK^\times\quad \text{and}\quad u\in
\calN(\wcO)
 \end{equation}
 such that $yz=k u$. Thus we have 
\begin{equation}
  \label{eq:134}
[\whJ]_\scc=\grD^\times\whD^1yz\wcO^\times=\grD^\times\whD^1ku\wcO^\times=k\grD^\times\whD^1\wcO^\times
u.
\end{equation}
Since $\whE$ is left invariant by $\whK^\times$ and right invariant by
$\calN(\wcO)$, we get 
\begin{equation}
  \label{eq:136}
\whE\cap [\whJ]_\scc=\whE\cap \big(k\grD^\times\whD^1\wcO^\times
u\big)=k\left(\whE\cap 
 \grD^\times\whD^1\wcO^\times \right)u=k (\whE\cap [\wcO]_\scc)u. 
\end{equation}
Lastly, observe that the map $x\mapsto kxu$ for $x\in \whE\cap [\wcO]_\scc$ induces a bijection 
\[ K^\times\bsh(\whE\cap [\wcO]_\scc)/\wcO^\times\to K^\times\bsh(\whE\cap[\whJ]_\scc)/\wcO^\times.  \]
Therefore,  (\ref{eq:132}) 
holds for $J'=\calO$ by Lemma~\ref{lem:adelic-trace}.  The proposition is proved. 
\end{proof}

\section*{Acknowledgments}
The authors would like to express their gratitude to Tomoyoshi Ibukiyama, John Voight and
Chao Zhang for stimulating discussions.   Yu is partially supported
by the MoST grants
107-2115-M-001-001-MY2 and 109-2115-M-001-002-MY3. The first draft of this 
manuscript was prepared during the first author's 2019 visit to Institute
of Mathematics, Academia Sinica. He thanks the institute for the warm
hospitality and great working conditions.

\bibliographystyle{hplain}
\bibliography{TeXBiB}
\end{document}